\newtheorem{thm}{Theorem}[section]
\newtheorem{cor}[thm]{Corollary}
\newtheorem{lem}[thm]{Lemma}
\newtheorem{prop}[thm]{Proposition}
\newtheorem{defn}[thm]{Definition}
\newtheorem{rem}[thm]{Remark}
\newtheorem*{thm*}{Theorem}
\newtheorem*{defn*}{Definition}
\numberwithin{equation}{section}
\newcommand{\dx}{\,{\rm d}x}
\newcommand{\dt}{\,{\rm d}t}
\newcommand\dtau{{\,{\rm d}\tau}}
\newcommand{\rd}{{\rm d}}
\newcommand{\NN}{\mathbb{N}}
\newcommand{\RR}{\mathbb{R}}
\newcommand\EE{{\mathcal{E}}}
\newcommand\JJ{{\mathcal{J}}}
\newcommand\LL{{\mathcal{L}}}
\newcommand\UU{{\mathcal{U}}}
\newcommand{\vep}{\varepsilon}
\def\dist{\mathrm{dist}} 
\def\INT{\mathrm{int}} 
\newcommand{\loc}{\mathrm{loc}}
\newcommand{\leb}{\mathscr{L}}
\begin{document}

\title[]{Elliptic regularization of some semilinear parabolic free boundary problems}

\author[A. Audrito]{Alessandro Audrito}
\address{Alessandro Audrito \newline \indent
Politecnico di Torino, DISMA, Corso Duca degli Abruzzi 24, 10129, Torino, Italia.
 }
\email{alessandro.audrito@polito.it}

\author[T. Sanz-Perela]{Tom\'as Sanz-Perela}
\address{T. Sanz-Perela  \newline \indent
	Departamento de Matemáticas, Universidad Autónoma de Madrid, Ciudad Universitaria de Cantoblanco, 28049 Madrid, Spain
 }
\email{tomas.sanz@uam.es}

\date{\today} 

\subjclass[2010] {
35A15,   
35R35, 
35K55, 
}

\keywords{Elliptic regularization, Uniform Estimates, Variational techniques, Non-degeneracy.}
%
%
%
%
%
%
%
%
%
%
%

\thanks{This project has received funding from the European Union's Horizon 2020 research and innovation programme under the Marie Sk{\l}odowska--Curie grant agreement 892017 (LNLFB-Problems) and from the European Research Council (ERC) under the grant agreement 948029. The first author is supported by the INDAM-GNAMPA project number CUP-E53C22001930001.
	The second author is supported  by grants PID2020-113596GB-I00, PID2021-123903NB-I00, and RED2018-102650-T funded by MCIN/AEI/10.13039/501100011033 and by ``ERDF A way of making Europe''
}

\begin{abstract}
We prove existence of strong solutions to a family of some semilinear parabolic free boundary problems by means of elliptic regularization. Existence of solutions is obtained in two steps: we first show some uniform energy estimates and then we pass to the weak limit. To carry out this second step, we establish uniform non-degeneracy estimates for the approximating sequence as well as parabolic non-degeneracy and optimal regularity for the limit solution.
To the best of our knowledge, this is the first time the elliptic regularization approach is used in the context of parabolic obstacle problems.
\end{abstract}

\maketitle


%
%
%
%
\section{Introduction}
In this note we use \emph{elliptic regularization} to construct strong solutions to the following class of semilinear parabolic \emph{free boundary} problems 
\begin{equation}\label{eq:ParReacDiff}
\begin{cases}
\partial_t u - \Delta u = - f_\gamma(u) \quad &\text{ in }  Q := \mathbb{R}^n \times (0,\infty), \\
u|_{t=0} = u_0     \quad &\text{ in } \mathbb{R}^n,
\end{cases}
\end{equation}
where $n \geq 1$, $\gamma \in [1,2)$, $u_0 \geq 0$, and 
\begin{equation}\label{eq:Reaction}
f_\gamma(u) := \gamma \chi_{\{u > 0\}} u^{\gamma - 1}.
\end{equation}
Such parabolic free boundary problems appear in Chemical engineering (see \cite{Phillips1987:art}) and transport of thermal energy in plasma (see \cite{Martinson1976:art}). 
The mathematical study of its solutions and their free interfaces was initiated by Caffarelli in \cite{Caf78,Caf78Bis} (see also \cite{CF79}) in the case $\gamma = 1$, in which \eqref{eq:ParReacDiff} reduces to a version of the Stefan problem (see e.g. \cite{FriedKinder1975:art,CF79,FigRosSerra1,AudKuk2022:art}), and later extended in the elliptic setting to the range $\gamma \in (0,2)$ by Alt and Phillips \cite{AltPhilips1986:art}  (see also \cite{Phillips1983:art,Phillips1983bis:art}). 
In the parabolic setting, we mention the works of Weiss \cite{Weiss1999:art,Weiss2000:art} (see also \cite{ChoeWeiss2003}, by Choe and Weiss), where the classification of blow-ups and fine regularity properties of the free boundary were established, together with sharp bounds on the (parabolic) Hausdorff dimension of $\partial \{u > 0\}$. Notice that in \cite{Weiss1999:art} the singular range $\gamma \in [0,1)$ was considered too. 
To the best of our knowledge, there are not further results in the range $\gamma < 1$, except the very recent papers \cite{DeSilSav2022:art,DeSilSav2022bis:art}, \cite{DiKarVal2022:art} and \cite{DurGiac2022:art}, where the authors have studied the range $\gamma < 0$ in the elliptic framework.

Existence of weak/strong solutions to \eqref{eq:ParReacDiff} is well-known (see e.g.~\cite{FriedKinder1975:art}). 
However, the \emph{elliptic regularization} approach is, to the best of our knowledge, new in the context of parabolic obstacle problems, and presents some interesting features with respect to the classical approach. 
It is a \emph{variational} approximation procedure (also known as \emph{WIED} method), introduced in the works of Lions~\cite{Lions1965:art} and Oleinik~ \cite{Oleinik1964:art} (see also the article of De Giorgi~\cite{DeGiorgi1996:art}). 
Due to its intrinsic flexibility, elliptic regularization has been applied in many contexts, see for instance \cite{Ilmanen1994:art,MielkeStefanelli2011:art,BogeleinEtAl2014:art,AkagiStefanelli2016:art,BogeleinEtAl2017:art,RSSS:2019} in the parabolic setting, and also \cite{SerraTilli2012:art,SerraTilli2016:art} in the context of nonlinear wave equations (see also the more recent works \cite{AudritoSerraTilli2021:art,Audrito2021:art} concerning systems with strong competition and nonlocal parabolic problems, respectively).

The main idea of this approach is to approximate solutions to the \emph{parabolic} problem \eqref{eq:ParReacDiff} by using (absolute) minimizers of the functional
\begin{equation}\label{eq:Functional0}
\EE_\vep(w) := \int_0^\infty \frac{e^{- t/\varepsilon}}{\varepsilon} \bigg( \int_{\RR^n} (\vep|\partial_t w|^2 + |\nabla w|^2 ) \dx + 2\int_{\RR^n} w_+^\gamma  \dx \bigg) \dt,
\end{equation}
where $\vep \in (0,1)$ is a free parameter and $w_+$ is the standard notation for $\max\{w,0\}$. 
Indeed, it is not difficult to check that, when $\gamma \in (1,2)$, any critical point $u_\vep$ satisfies 
\begin{equation}\label{eq:PerProbEps}
\begin{cases}
-\varepsilon \partial_{tt} u_\vep + \partial_t u_\vep - \Delta u_\vep = - f_\gamma(u_\vep) \quad &\text{ in }  Q = \RR^n \times(0,\infty), \\
u|_{t=0} = u_0     \quad &\text{ in } \mathbb{R}^n,
\end{cases}
\end{equation}
in the weak sense (the case $\gamma = 1$ is more involved, see the comments below).
Problem~\eqref{eq:PerProbEps} is exactly \eqref{eq:ParReacDiff} ``up to'' the extra term $-\varepsilon \partial_{tt} u_\vep$, which makes the equation \emph{elliptic} for every $\vep > 0$, but \emph{degenerate} as $\vep \to 0$: it is thus reasonable to expect that, if minimizers of \eqref{eq:Functional0} enjoy some uniform boundedness properties, we may pass to the weak limit into \eqref{eq:PerProbEps} (along a suitable subsequence $\vep_j \to 0$) and obtain a solution $u$ to \eqref{eq:ParReacDiff}.

\smallskip

This plan has two main steps:

\begin{enumerate}[{label=\arabic{*}.}]
	\item Prove uniform energy estimates in the spirit of \cite{SerraTilli2012:art,SerraTilli2016:art} and deduce compactness of families of minimizers in suitable Sobolev spaces.
	
	\item Pass rigorously to the limit into \eqref{eq:PerProbEps} to obtain  \eqref{eq:ParReacDiff}.
\end{enumerate}

The main difficulties arising from the fact that we are dealing with a free boundary problem appear precisely in the second step and when $\gamma = 1$ (the case $\gamma \in (1,2)$ is quite standard since $f_\gamma$ is continuous).
Indeed, there are two issues that must be treated with care, and are the core of this paper.
First, we prove that minimizers $u_\varepsilon$ of $\EE_\vep$ satisfy \eqref{eq:PerProbEps}.
This is not obvious, as mentioned above, since the function $w \mapsto w_+$ is not differentiable at $w=0$; see \Cref{Lem:EulerEqMinFF} below.
Second, once we have the limit of the minimizers, denoted by $u$ (which is obtained by compactness and after passing to a subsequence), we show that we can take the limit $\varepsilon\to 0$ in \eqref{eq:PerProbEps} and obtain \eqref{eq:ParReacDiff}.
This requires to prove that $\chi_{\{u_\varepsilon>0\}} \to  \chi_{\{u>0\}}$ and this is also non-obvious.
Indeed, to prove it we establish a uniform non-degeneracy property for the family $u_\varepsilon$ close to free boundary points, as well as optimal regularity and parabolic non-degeneracy estimates for the limit $u$.
All these ingredients and fine estimates for the measure of $\partial \{u>0\}$ are crucially exploited to pass to the limit as $\vep \to 0$.
\smallskip

\medskip

It is important to stress that our approach is not only aimed to construct solutions in an alternative way with respect to the existing literature (regularizing $\chi_{\{ u > 0\}}$ in the nonlinearity $f_\gamma(u)$ and using classical tools, see~\cite{FriedKinder1975:art}).
Indeed, we expect some features of the elliptic problem (with $\varepsilon>0$) being inherited by solutions to \eqref{eq:ParReacDiff}, and we hope this could lead to a different approach to study parabolic free boundary problems in the future.
For this, the first step is to establish the appropriate convergence of approximating minimizers towards solutions to \eqref{eq:ParReacDiff}, as we do in this paper.

The techniques we use in this article allow us to prove existence of solutions for every $\gamma \in [1,2)$, and some of them do not work for $\gamma \in (0,1)$ (see \Cref{Rem:gammain01} below). 
The main reason is that we use in a crucial way the weak formulation of the problem, obtained by considering competitors of the form $u + \delta \varphi$ with $\delta >0 $ and $\varphi \in C^\infty_c(Q)$, and the weak convergence in $H^1$. 
Since for $\gamma\in (0,1)$ the function $u\mapsto f_\gamma(u)$ has a strong singularity at $u=0$, we cannot use the standard weak formulation, but we would need to consider competitors constructed through domain deformations, in order to avoid differentiating the function $u\mapsto u_+^\gamma$.
This approach is much more delicate and will be treated in a forthcoming article.
%
%
%
%
%
%
%

\begin{rem}
	As mentioned above, the elliptic regularization approach is quite flexible and more general/complex equations can be attacked with similar techniques (see for instance \cite{AkagiStefanelli2016:art,SerraTilli2016:art}). 
	Here we just mention that our methods can be slightly modified to prove existence of strong solutions to
	\begin{equation}\label{eq:BDDDomain}
	\begin{cases}
		\partial_t u - \Delta u = - f_\gamma(u) \quad &\text{ in }  \Omega \times (0,\infty), \\
		u|_{t=0} = u_0     \quad &\text{ in } \Omega,
	\end{cases}
	\end{equation}
	where $\Omega \subset \RR^n$ is a bounded domain and homogeneous Dirichlet or Neumann conditions are posed on $\partial \Omega \times (0,\infty)$. 
	The proof of such fact is postponed to Section \ref{Sec:Extension}; see Corollary \ref{cor:Dirichlet}.

\end{rem}

In the next subsection, we introduce the functional setting and we state our main result.
%
%
%
%
%
%
%
%
%
\subsection{Functional setting and main result}
We will work with the space
\[
\UU := \bigcap_{r > 0} H^1(Q_r^+), \qquad \text{ where } \ Q_r^+ := B_r \times (0,r^2),
\]
made of functions $u \in L^2(Q_r^+)$ with weak derivatives $\partial_t u \in L^2(Q_r^+)$, $\partial_i u \in L^2(Q_r^+)$ ($i = 1,\ldots,n$), for every $r > 0$. 
The functional \eqref{eq:Functional0} is well-defined on $\UU$ with values in $[0,+\infty]$: we will seek for minimizers $u \in \UU$, subject to the initial condition $u|_{t=0} = u_0$ in the sense of traces, where
\begin{equation}\label{eq:Ass2Data}
\ u_0 \in H^1(\RR^n) \cap L^\gamma(\RR^n) \quad \text{ and } \quad u_0 \geq 0 \quad \text{a.e. in } \RR^n,
\end{equation}
In other words, we will minimize $\EE_\vep$ over the space
\begin{equation}
\UU_0 := \{ u \in \UU: u|_{t=0} = u_0 \}.
\end{equation}
We remark that the assumptions on the initial data guarantee that $u_0$ (seen as a function of $x$ and $t$) belongs to $\UU_0$: in particular, $\UU_0 \not= \varnothing$. This will be useful in the proof of Lemma \ref{lem:ExistenceMin}, where we will use $u_0$ as a competitor and prove the elementary, yet crucial, estimate \eqref{eq:LevelEst}. We do not expect such assumptions to be optimal, but they have the advantage to make the arguments direct and easy-readable.

We now introduce the notion of strong solutions to \eqref{eq:ParReacDiff}.
\begin{defn}\label{def:StrongSolReacDiffProb}
Let $n \geq 1$ and $\gamma \in [1,2)$. 
We say that a function $u$ is a strong solution to \eqref{eq:ParReacDiff} if 

$\bullet$ $u \in L_\loc^2(0,\infty: H^1(\RR^n))$ with $\partial_t u \in L_\loc^2(0,\infty:L^2(\RR^n))$, and $u|_{t=0} = u_0$ in the sense of traces.

$\bullet$ The integral relation  
\begin{equation}\label{eq:EulerParabolicIntro}
\int_Q (\partial_t u \eta + \nabla u \cdot\nabla\eta + f_\gamma(u)\eta ) \dx\rd t = 0
\end{equation}
is satisfied for every $\eta \in C_c^\infty(Q)$.
\end{defn}

Note that this definition differs from one for weak solutions in the fact that  $\partial_t u$ is an $L^2$-function and the integral \eqref{eq:EulerParabolicIntro} involves the term $\partial_t u \eta $ and not $-u \partial_t \eta$.
In particular, the fact that $\partial_t u$ is in $L^2$ yields that $\Delta u$ is also in $L^2$.

Finally, we state our main result.
\begin{thm}\label{thm:MainIntro}
Let $n \geq 1$, $\gamma \in [1,2)$ , $u_0$ as in \eqref{eq:Ass2Data}, and $f_\gamma$ as in \eqref{eq:Reaction}. 
Then there exist a sequence of minimizers $\{u_{\vep_j}\}_{j\in\NN}$ of \eqref{eq:Functional0} in $\UU_0$ and a continuous strong solution $u \in \UU_0$ to \eqref{eq:ParReacDiff} such that 
\[
\begin{aligned}
&u_{\vep_j} \rightharpoonup u \quad \text{ weakly in } \UU \\
&u_{\vep_j} \to u \quad \text{ locally uniformly in } Q.
\end{aligned}
\]
\end{thm}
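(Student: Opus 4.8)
The plan is to follow the two-step strategy outlined in the introduction: first establish uniform estimates and compactness for the minimizers $u_\vep$ of $\EE_\vep$, and then pass to the limit in the Euler--Lagrange equation. I would begin by constructing, for each fixed $\vep \in (0,1)$, an absolute minimizer $u_\vep \in \UU_0$ of $\EE_\vep$ via the direct method of the calculus of variations: since $u_0$ (viewed as a $t$-independent function) is an admissible competitor by \eqref{eq:Ass2Data}, we have $\inf_{\UU_0} \EE_\vep < +\infty$; weak lower semicontinuity of $\EE_\vep$ on each $H^1(Q_r^+)$ (the gradient terms are convex, the $w_+^\gamma$ term is continuous under strong $L^2_\loc$ convergence since $\gamma < 2$) together with coercivity gives a minimizer, and the constraint $u|_{t=0}=u_0$ passes to the limit by continuity of the trace. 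One must check the minimizer is nonnegative, which follows by comparing with $|u_\vep|$ or $(u_\vep)_+$, using that replacing $w$ by $w_+$ does not increase any term of $\EE_\vep$.

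\smallskip

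The heart of the argument is the \emph{uniform} (in $\vep$) energy estimate in the spirit of Serra--Tilli \cite{SerraTilli2012:art,SerraTilli2016:art}. Using the weighted structure $e^{-t/\vep}/\vep$ of $\EE_\vep$, a Gronwall-type / exponential-shift argument on the ``shifted'' competitor $u_\vep(\cdot, \cdot + h)$ (or a rescaling in $t$) should yield that, on every fixed parabolic cylinder $Q_R^+$, the quantities $\int_{Q_R^+} \vep|\partial_t u_\vep|^2$, $\int_{Q_R^+} |\nabla u_\vep|^2$, and $\int_{Q_R^+} (u_\vep)_+^\gamma$ are bounded by a constant depending only on $R$, $n$, $\gamma$, and $\|u_0\|_{H^1 \cap L^\gamma}$, but \emph{not} on $\vep$. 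From the equation \eqref{eq:PerProbEps} tested against $\partial_t u_\vep$ (with appropriate cutoffs) one then upgrades this to a uniform bound on $\int_{Q_R^+} |\partial_t u_\vep|^2$ — note the weight $\vep$ in front of $|\partial_t u_\vep|^2$ in the energy is \emph{not} enough, so this parabolic-type estimate using the equation is essential and is where the dissipative structure is used. These bounds give, along a subsequence $\vep_j \to 0$, weak convergence $u_{\vep_j} \rightharpoonup u$ in $\UU$ and, by Aubin--Lions (uniform $H^1$ in space, uniform $L^2$ of $\partial_t$ in time), strong convergence in $L^2_\loc(Q)$; the trace constraint $u|_{t=0}=u_0$ survives the limit.

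\smallskip

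Finally one passes to the limit in the weak formulation. For $\gamma \in (1,2)$ this is routine: $f_\gamma$ is continuous and subcritical, $u_{\vep_j} \to u$ a.e. along a further subsequence gives $f_\gamma(u_{\vep_j}) \to f_\gamma(u)$ a.e., and a uniform $L^{p}_\loc$ bound (from the $L^\gamma$ and $H^1$ control, using Sobolev) plus Vitali/dominated convergence lets one pass to the limit in $\int_Q (\partial_t u_{\vep_j}\eta + \nabla u_{\vep_j}\cdot\nabla\eta + f_\gamma(u_{\vep_j})\eta)$; the term $\vep_j \int_Q \partial_t u_{\vep_j}\, \partial_t\eta$ vanishes because $\sqrt{\vep_j}\,\partial_t u_{\vep_j}$ is uniformly $L^2_\loc$. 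For $\gamma = 1$ the nonlinearity $f_1(u) = \chi_{\{u>0\}}$ is discontinuous, so the above fails on the set $\{u = 0\}$: here one invokes the lemma cited as \Cref{Lem:EulerEqMinFF} to know $u_{\vep_j}$ solves \eqref{eq:PerProbEps}, and one uses the \emph{uniform non-degeneracy} of minimizers near free boundary points (established earlier) to control $\chi_{\{u_{\vep_j}>0\}}$: non-degeneracy forces $|\{0 < u_{\vep_j} < \delta\} \cap K|$ and the ``thin transition region'' to be negligible, so $\chi_{\{u_{\vep_j}>0\}} \to \chi_{\{u>0\}}$ in $L^1_\loc$ (up to a set where $u=0$ but which does not charge the limit equation), yielding \eqref{eq:EulerParabolicIntro}. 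I expect the main obstacle to be precisely the $\gamma=1$ passage to the limit: making rigorous that non-degeneracy kills the contribution of $\{u_{\vep_j} > 0\}\setminus\{u>0\}$ and identifying $\lim \chi_{\{u_{\vep_j}>0\}}$ on $\{u=0\}$ correctly — together with the companion fact that $\Delta u = \partial_t u + f_1(u) \in L^2_\loc$ so that the limit $u$ is genuinely a strong (not merely weak) solution.
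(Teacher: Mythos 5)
Your overall architecture (direct method for existence, uniform estimates, compactness, dominated convergence for $\gamma\in(1,2)$, non-degeneracy to handle $\chi_{\{u_{\vep_j}>0\}}$ when $\gamma=1$) is the paper's, and your limit passage is essentially correct as a sketch. The genuine gap is in the step you yourself call essential: the uniform bound on $\int\!\int|\partial_t u_\vep|^2$. You propose to obtain it by testing \eqref{eq:PerProbEps} with $\partial_t u_\vep$, claiming this is ``where the dissipative structure is used''. That step does not close. Testing with $\partial_t u_\vep$ on $\RR^n\times(0,T)$ gives
\[
\int_0^T\!\!\int_{\RR^n}|\partial_t u_\vep|^2\dx\dt+\tfrac12\int_{\RR^n}|\nabla u_\vep(\cdot,T)|^2\dx+\int_{\RR^n}(u_\vep)_+^\gamma(\cdot,T)\dx
\;\le\;\tfrac12\|\nabla u_0\|_{L^2}^2+\|u_0\|_{L^\gamma}^\gamma+\tfrac{\vep}{2}\int_{\RR^n}|\partial_t u_\vep(\cdot,T)|^2\dx,
\]
and the last term is exactly what you cannot control uniformly in $\vep$: minimality only yields the weighted bound $\int_0^\infty e^{-t/\vep}\int_{\RR^n}|\partial_t u_\vep|^2\dx\dt\le C$, so on good slices $T\sim 1$ one only gets $\vep\int|\partial_t u_\vep(\cdot,T)|^2\lesssim \vep e^{T/\vep}$, which blows up as $\vep\to0$. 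Using instead the natural multiplier $e^{-t/\vep}\partial_t u_\vep$ just reproduces the weighted estimate, i.e.\ information on the time scale $t=O(\vep)$ only, not on a fixed cylinder $Q_R^+$; and there is no parabolic dissipation to invoke, since for $\vep>0$ the problem is elliptic with nothing prescribed as $t\to\infty$ beyond integrability against the weight. For the same reason, your claim that the weighted structure ``should yield'' uniform bounds for $\int_{Q_R^+}|\nabla u_\vep|^2+(u_\vep)_+^\gamma$ on fixed cylinders is also not automatic: $\EE_\vep(u_\vep)\le C$ by itself only sees times $t\lesssim\vep$.

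What the paper actually does --- and what your passing remark about shifted competitors/rescaling in $t$ gestures at without carrying out --- is purely variational: comparing the minimizer $v$ of the rescaled functional $\JJ_\vep$ with the inner variation $w_\delta(x,t)=v(x,t-\delta\zeta(t))$ (which preserves the trace at $t=0$) yields the identity \eqref{eq:DerivEn2}, $E'=-2I$, for the forward weighted energy $E(t)=e^{t}\int_t^\infty e^{-\tau}(I(\tau)+R(\tau))\dtau$ (\Cref{Lemma:DerivEn}). Monotonicity of $E$ together with $E(0)=\JJ_\vep(v_\vep)\le C\vep$ gives the \emph{unweighted} bound $\int_0^\infty\!\int|\partial_t v_\vep|^2\le C\vep$, which rescales exactly to \eqref{eq:EnBound1}, and the same monotonicity applied on unit time intervals and summed $\sim r/\vep$ times gives \eqref{eq:EnBound2} (\Cref{cor:GlobalEnEstV}, \Cref{thm:UnifEnergyEst}). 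So the $L^2$ bound on $\partial_t u_\vep$ is not ``upgraded'' from the $\vep$-weighted one via the equation; it comes directly from minimality. Since your compactness, the vanishing of $\vep_j\int\partial_t u_{\vep_j}\partial_t\eta$, and even the duality argument in the $\gamma=1$ Euler--Lagrange lemma (which quotes \eqref{eq:EnBound1}) all rest on this estimate, the gap is essential and needs to be repaired along these variational lines.
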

The rest of the paper is organized as follows. In \Cref{sec:GlobalUnifEnEst} we prove existence and uniqueness of minimizers of $\EE_\vep$ in $\UU_0$, and we establish the uniform energy estimates given in \eqref{eq:EnBound1} and \eqref{eq:EnBound2}. 
As a corollary, we prove weak and strong convergence of minimizers (see \Cref{prop:weaklimit} below).
\Cref{Sec:L2LinfEst} is divided into three parts.
First, in \Cref{Subsec:ELeq} we write the Euler-Lagrange equations for minimizers and show uniform non-degeneracy near free boundary points. 
Then, in \Cref{Subsec:EstimatesParabolic} we establish optimal regularity and parabolic non-degeneracy results for solutions to parabolic obstacle problems.
Finally, all these ingredients are combined crucially in \Cref{Subsec:ProofMainResult} to prove  \Cref{thm:MainIntro}. 
In the last section of the article, \Cref{Sec:Extension}, we comment on the slight modifications that one must perform in our methods in order to construct strong solutions to \eqref{eq:BDDDomain} with homogeneous Dirichlet or Neumann conditions on the parabolic boundary.


%
%
%
%
\section{Energy estimates and Compactness}\label{sec:GlobalUnifEnEst}
In this section we establish the following result.
\begin{prop}\label{prop:weaklimit} 
Let $\{u_\vep\}_{\vep \in (0,1)} \in \UU_0$ be a family of minimizers of $\EE_\vep$. Then there exist $u \in \UU_0$ and a sequence $\vep_j \to 0$ such that 
\begin{equation}\label{eq:WeakLimitProp}
\begin{aligned}
&u_{\vep_j} \rightharpoonup u \quad \text{ weakly in } \UU \\
&u_{\vep_j} \to u \quad \;\text{ in } L^2_{\loc}(Q).
\end{aligned}
\end{equation}
\end{prop}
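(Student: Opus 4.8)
The plan is to obtain $u$ as a weak limit of a subsequence of minimizers, with the key input being uniform (in $\vep$) bounds for $u_\vep$ in $H^1(Q_r^+)$ for every fixed $r>0$. The starting point is the minimality of $u_\vep$: testing against the competitor $u_0$ (viewed as a function of $(x,t)$, which lies in $\UU_0$ by \eqref{eq:Ass2Data}) gives $\EE_\vep(u_\vep) \le \EE_\vep(u_0)$. Since $\int_0^\infty \vep^{-1} e^{-t/\vep}\,\dt = 1$, the right-hand side is bounded by a constant depending only on $\|u_0\|_{H^1(\RR^n)}$ and $\|u_0\|_{L^\gamma(\RR^n)}$, uniformly in $\vep\in(0,1)$. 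This yields the two announced energy estimates \eqref{eq:EnBound1}, \eqref{eq:EnBound2}, in particular a uniform bound on
\[
\int_0^\infty \frac{e^{-t/\vep}}{\vep}\int_{\RR^n}\big(\vep|\partial_t u_\vep|^2 + |\nabla u_\vep|^2 + 2(u_\vep)_+^\gamma\big)\dx\,\dt \le C.
\]

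The main obstacle is converting this \emph{weighted} global bound into \emph{unweighted local} bounds on a fixed cylinder $Q_r^+ = B_r\times(0,r^2)$, because the weight $\vep^{-1}e^{-t/\vep}$ degenerates badly: on $(0,r^2)$ it is bounded below only by $\vep^{-1}e^{-r^2/\vep}$, which is exponentially small. So one cannot simply restrict the integral. The way around this — as in Serra–Tilli — is a rescaling/iteration argument exploiting the structure of the weight: roughly, for each fixed time level one slides a cylinder and uses that the weight is comparable to a constant on dyadic time-intervals $(2^{-k-1}, 2^{-k})$ of length $\sim 2^{-k}$, combined with the fact that if the energy were large on a fixed cylinder $Q_r^+$ then minimality would be violated by a suitable modification of $u_\vep$ there (e.g. gluing in a harmonic-type extension, or using a translated-in-time competitor). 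Concretely, I expect one proves a local energy estimate of the form $\int_{Q_r^+}(|\nabla u_\vep|^2 + |\partial_t u_\vep|^2)\,\dx\,\dt \le C(r)$ with $C(r)$ independent of $\vep$, plus a uniform $L^2(Q_r^+)$ bound obtained by controlling the trace at $t=0$ (which is $u_0$) and integrating $\partial_t u_\vep$ in time via Cauchy–Schwarz. Here the assumption $\gamma<2$ matters so that $(u_\vep)_+^\gamma$ is controlled (sub-quadratically) and the lower-order term does not spoil coercivity.

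Once the uniform bounds $\|u_\vep\|_{H^1(Q_r^+)}\le C(r)$ are in hand for every $r>0$, the conclusion is standard. Take an exhausting sequence $r_m\to\infty$; for each $m$, $\{u_\vep\}$ is bounded in $H^1(Q_{r_m}^+)$, so by reflexivity a subsequence converges weakly there, and by Rellich–Kondrachov (the embedding $H^1(Q_{r_m}^+)\hookrightarrow L^2(Q_{r_m}^+)$ is compact) it converges strongly in $L^2(Q_{r_m}^+)$. A diagonal argument over $m$ produces a single sequence $\vep_j\to 0$ and a limit $u$ with $u_{\vep_j}\rightharpoonup u$ weakly in $H^1(Q_r^+)$ for every $r$ — i.e. weakly in $\UU$ — and $u_{\vep_j}\to u$ in $L^2_\loc(Q)$. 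Finally, to check $u\in\UU_0$, i.e. $u|_{t=0}=u_0$, I use that the trace operator $H^1(Q_r^+)\to L^2(B_r)$ is weakly continuous (being bounded and linear): since each $u_{\vep_j}$ has trace $u_0$ on $\{t=0\}$ and $u_{\vep_j}\rightharpoonup u$ in $H^1(Q_r^+)$, the trace of $u$ is the weak limit of the constant sequence $u_0$, hence equals $u_0$. This proves \eqref{eq:WeakLimitProp} and Proposition~\ref{prop:weaklimit}.
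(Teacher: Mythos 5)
The soft part of your argument (exhaustion by cylinders, reflexivity, Rellich--Kondrachov, diagonal extraction, weak continuity of the trace to get $u|_{t=0}=u_0$) is correct and is exactly how the paper concludes. The problem is that the core input is never established. You correctly observe that minimality against the competitor $u_0$ only gives the \emph{weighted} bound $\int_0^\infty \vep^{-1}e^{-t/\vep}\int_{\RR^n}(\vep|\partial_t u_\vep|^2+|\nabla u_\vep|^2+2(u_\vep)_+^\gamma)\dx\dt\le C$, and that the whole difficulty is converting this into unweighted bounds on a fixed $Q_r^+$; but at precisely that point you write ``I expect one proves a local energy estimate of the form $\int_{Q_r^+}(|\nabla u_\vep|^2+|\partial_t u_\vep|^2)\le C(r)$ independent of $\vep$''. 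That expected estimate \emph{is} the content of \Cref{thm:UnifEnergyEst}, i.e.\ the real substance of the proposition, and it cannot be extracted from the weighted bound by any slicing: on any fixed time window away from $t=0$ the weight $\vep^{-1}e^{-t/\vep}$ is exponentially small in $1/\vep$, so the dyadic-interval idea gives nothing uniform, and the ``translated-in-time competitor / suitable modification'' you allude to is left entirely unexecuted. (Your earlier sentence claiming the weighted bound ``yields the two announced energy estimates \eqref{eq:EnBound1}, \eqref{eq:EnBound2}'' is, as stated, false, and you implicitly retract it in the next paragraph.)

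For comparison, here is the mechanism the paper actually uses, which is the missing step. Rescale in time, $v_\vep(x,t):=u_\vep(x,\vep t)$, so that $\EE_\vep(u_\vep)=\vep^{-1}\JJ_\vep(v_\vep)$ with $\JJ_\vep$ as in \eqref{eq:ResFunctional}, and note $\JJ_\vep(v_\vep)\le\JJ_\vep(u_0)\le C\vep$. Then perform an \emph{inner variation in time} in the spirit of Serra--Tilli: testing minimality with $w_\delta(x,t)=v_\vep(x,t-\delta\zeta(t))$ and computing the first variation shows that the tail energy $E(t)=e^t\int_t^\infty e^{-\tau}\big(I(\tau)+R(\tau)\big)\dtau$ satisfies $E'=-2I$ in $\mathcal{D}'(\RR_+)$ (\Cref{Lemma:DerivEn}). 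Hence $E$ is non-increasing, $E(t)\le E(0)=\JJ_\vep(v_\vep)\le C\vep$, which yields simultaneously $\int_0^\infty\int_{\RR^n}|\partial_t v_\vep|^2\dx\dt\le C\vep$ and $\int_r^{r+1}\int_{\RR^n}\big(|\nabla v_\vep|^2+2(v_\vep)_+^\gamma\big)\dx\dt\le C$ for every $r\ge0$ (\Cref{cor:GlobalEnEstV}); undoing the time scaling and summing over $\lceil r/\vep\rceil$ unit intervals gives \eqref{eq:EnBound1}--\eqref{eq:EnBound2}, and only then does your compactness argument apply. Without some quantitative use of minimality of this kind (a monotonicity/energy-decay identity, not just the level bound $\EE_\vep(u_\vep)\le C$), the proposal does not close.
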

Before addressing to the proof of the above statement, we show some basic properties of minimizers of the functional $\EE_\vep$.
%
%
\subsection{Existence of minimizers}
It is not difficult to show that for every $\vep \in (0,1)$ the functional $\EE_\vep$ has a unique minimizer in $\UU_0$ (see \Cref{lem:ExistenceMin} below). 
Before presenting the proof, we introduce the functional 
\begin{equation}\label{eq:ResFunctional}
\JJ_\vep(w) := \int_0^\infty e^{-t} \bigg( \int_{\mathbb{R}^n} \left( |\partial_t w|^2 + \vep|\nabla w|^2 \right)\rd x + 2\vep \int_{\mathbb{R}^n} w_+^\gamma \dx \bigg) \dt,
\end{equation}
which satisfies
\begin{equation}\label{eq:RelFuncFJ}
\EE_\vep(u) = \frac{1}{\varepsilon} \JJ_\vep(v), \quad \text{ whenever } v(x,t) = u(x,\varepsilon t).
\end{equation}
Since $v$ and $u$ coincide at $t=0$, minimizing $\EE_\vep$ in $\UU_0$ is equivalent to minimizing $\JJ_\vep$ in the same space. 
Working with the functional $\JJ_\vep$ allows us to keep the notations easier.
\begin{lem}\label{lem:ExistenceMin}
For every $\varepsilon \in (0,1)$, the functional $\JJ_\vep$ defined in \eqref{eq:ResFunctional} has a unique minimizer $v_\vep$ in $\UU_0$. 
Such minimizer satisfies $v_\vep \geq 0$ a.e. in $Q$. Moreover, there exists a constant $C > 0$, depending only on $n$, $\gamma$, and $u_0$, such that
\begin{equation}\label{eq:LevelEst}
\JJ_\vep(v_\vep) \leq C \varepsilon.
\end{equation}
\end{lem}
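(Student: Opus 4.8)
The plan is to establish the three assertions in order: existence and uniqueness of a minimizer $v_\vep \in \UU_0$, nonnegativity $v_\vep \geq 0$, and the energy bound $\JJ_\vep(v_\vep) \leq C\vep$. For existence I would use the direct method of the calculus of variations. First observe that $\UU_0 \neq \varnothing$ (since $u_0$, viewed as a function independent of $t$, lies in $\UU_0$ by the assumptions \eqref{eq:Ass2Data}) and that $\JJ_\vep \geq 0$, so $m := \inf_{\UU_0} \JJ_\vep \in [0,\infty)$ is finite. Take a minimizing sequence $\{w_k\} \subset \UU_0$. The first two terms of $\JJ_\vep$ control, for each fixed $r>0$, the $H^1(Q_r^+)$-norm of $w_k$ up to the exponential weight $e^{-t}$, which is bounded below by a positive constant on any bounded time interval; hence $\{w_k\}$ is bounded in $H^1(Q_r^+)$ for every $r$. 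By a diagonal argument we extract a subsequence converging weakly in each $H^1(Q_r^+)$ to some $v_\vep \in \UU$, and strongly in $L^2_\loc(Q)$ and a.e. (Rellich). The trace operator $w \mapsto w|_{t=0}$ is continuous on each $H^1(Q_r^+)$ and weakly continuous, so $v_\vep|_{t=0} = u_0$, i.e. $v_\vep \in \UU_0$. Lower semicontinuity of $\JJ_\vep$ follows because the quadratic gradient part is weakly lower semicontinuous (convexity), the weight $e^{-t}$ is positive, and $\int w_+^\gamma$ passes to the limit by a.e. convergence together with Fatou (using $\gamma \geq 1$, $w_+^\gamma$ is continuous and nonnegative in $w$); thus $\JJ_\vep(v_\vep) \leq \liminf_k \JJ_\vep(w_k) = m$, so $v_\vep$ is a minimizer. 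Uniqueness follows from strict convexity of $\JJ_\vep$ on $\UU_0$: the map $w \mapsto |\partial_t w|^2 + \vep|\nabla w|^2$ is strictly convex modulo constants while $w \mapsto w_+^\gamma$ is convex (since $\gamma \geq 1$), and the affine constraint $\UU_0$ is convex; any two minimizers would have equal derivatives a.e. and equal traces at $t=0$, forcing them to coincide.

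For nonnegativity, I would use the standard truncation argument: if $v_\vep$ is the minimizer, then $v_\vep^+ := \max(v_\vep, 0)$ also lies in $\UU_0$ (its trace at $t=0$ is $u_0^+ = u_0$ since $u_0 \geq 0$), and on the set $\{v_\vep < 0\}$ one has $|\nabla v_\vep^+| = 0 \leq |\nabla v_\vep|$, $|\partial_t v_\vep^+| = 0$, and $(v_\vep^+)_+^\gamma = 0 \leq (v_\vep)_+^\gamma$; hence $\JJ_\vep(v_\vep^+) \leq \JJ_\vep(v_\vep)$ with strict inequality unless $\{v_\vep < 0\}$ is null. By uniqueness, $v_\vep = v_\vep^+ \geq 0$ a.e.

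Finally, for the energy bound I would test minimality against the explicit competitor $w_0(x,t) := u_0(x)$ (the $t$-independent extension of the initial datum), which belongs to $\UU_0$. Since $\partial_t w_0 = 0$, we get
\[
\JJ_\vep(v_\vep) \leq \JJ_\vep(w_0) = \vep \int_0^\infty e^{-t}\dt \left( \int_{\RR^n} |\nabla u_0|^2 \dx + 2\int_{\RR^n} (u_0)_+^\gamma \dx \right) = \vep \left( \|\nabla u_0\|_{L^2(\RR^n)}^2 + 2\|u_0\|_{L^\gamma(\RR^n)}^\gamma \right),
\]
where I used $u_0 \geq 0$ and $\int_0^\infty e^{-t}\dt = 1$. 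This is exactly \eqref{eq:LevelEst} with $C = \|\nabla u_0\|_{L^2(\RR^n)}^2 + 2\|u_0\|_{L^\gamma(\RR^n)}^\gamma$, finite by \eqref{eq:Ass2Data} and depending only on $n$, $\gamma$, $u_0$.

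I expect the main technical obstacle to be the existence part, specifically verifying that the minimizing sequence is genuinely bounded in $H^1(Q_r^+)$ for every $r$ uniformly enough to run the diagonal extraction, and checking the continuity of the trace under weak $H^1(Q_r^+)$-convergence — the exponential weight $e^{-t}$ degenerates as $t \to \infty$, so one must restrict to finite time windows and be careful that the limit object is defined consistently across the nested domains $Q_r^+$. The convexity and competitor arguments are routine once this compactness framework is in place.
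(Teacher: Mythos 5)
Your proposal follows essentially the same route as the paper's proof: the direct method with a diagonal extraction over the nested cylinders $Q_r^+$, weak lower semicontinuity plus Fatou for the potential term, convexity (strict in the derivatives, combined with the common trace at $t=0$) for uniqueness, the truncation $v \mapsto v_+$ for nonnegativity, and the $t$-independent competitor $u_0$ for the bound $\JJ_\vep(v_\vep) \leq C\vep$, which you compute exactly as in \eqref{eq:Energyu0}. (Your identification of the limit's trace via weak continuity of the trace operator is a harmless variant of the paper's remark that $\UU_0$ is closed and convex, hence weakly closed.)

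The one genuine gap is in the compactness step. You assert that ``the first two terms of $\JJ_\vep$ control, for each fixed $r>0$, the $H^1(Q_r^+)$-norm of $w_k$'' after accounting for the weight $e^{-t}$. They do not: they only control the derivative part $\|\partial_t w_k\|_{L^2(Q_r^+)} + \|\nabla w_k\|_{L^2(Q_r^+)}$ (with an $\vep$- and $r$-dependent constant), and the remaining term $\int w_{k,+}^\gamma$ only yields an $L^\gamma$ bound on the positive part with $\gamma < 2$, so no $L^2(Q_r^+)$ bound on $w_k$ itself comes for free. Without that bound you cannot extract a weak $H^1(Q_r^+)$ limit or apply Rellich. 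The missing ingredient is exactly the one the paper invokes: since every element of the minimizing sequence has the same trace $u_0$ at $t=0$, a Poincar\'e-type inequality (fundamental theorem of calculus in $t$ starting from the initial trace) gives $\|w_k(\cdot,t)\|_{L^2(B_r)} \leq \|u_0\|_{L^2(B_r)} + t^{1/2}\|\partial_t w_k\|_{L^2(B_r \times (0,t))}$, hence $\|w_k\|_{L^2(Q_r^+)} \leq C(r)\bigl(\|u_0\|_{L^2(\RR^n)} + \|\partial_t w_k\|_{L^2(Q_r^+)}\bigr)$. Once this is inserted, the rest of your argument (diagonal extraction, identification of the trace, lower semicontinuity, uniqueness, nonnegativity, and the competitor bound) goes through as in the paper.
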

\begin{proof}
	By considering $u_0$ (as a function of $x$ and $t$) we see that, thanks to \eqref{eq:Ass2Data}, we have 
	\begin{equation}\label{eq:Energyu0}
		\begin{aligned}
			\JJ_\vep(u_0) & = \vep \int_0^\infty e^{-t} \bigg ( \int_{\mathbb{R}^n} |\nabla u_0|^2 \rd x +  2\varepsilon \int_{\mathbb{R}^n} (u_0)_+^\gamma \dx \bigg )\dt \\
			& \leq \varepsilon \big( \| u_0\|_{H^1(\RR^n)}^2  + 2 \|u_0\|_{L^\gamma(\RR^n)}^\gamma \big) \\
			& \leq C\varepsilon,
		\end{aligned}
	\end{equation}
	and thus $\JJ_\vep \not \equiv +\infty$ on $\UU_0$.
	To prove the existence of a minimizer, we consider a minimizing sequence $v_j \in \UU_0$:
\begin{equation}\label{eq:MinSeqCond}
\JJ_\vep(v_j) \to \inf_{v \in \UU_0} \JJ_\vep(v) := I_\vep \geq 0.
\end{equation}
Consequently, for every fixed $r > 0$, we have 
\[
\int_{Q_r^+} \left(|\partial_t v_j|^2 + \varepsilon |\nabla v_j|^2 \right)\rd x \dt \leq C,
\]
for some $C > 0$ independent of $j$. 
Since for every $j$ we have $v_j|_{t=0} = u_0$, given $\delta > 0$,  for a.e. $s \in (0, r^2)$ it holds
\[
\begin{aligned}
\|v_j(\cdot, s)\|_{L^2(B_r)}^2 &= \| u_0 \|_{L^2(B_r)}^2 + 2\int_0^s \int_{B_r} v_j(x,t)\partial_t v_j(x,t) \rd x \rd t\\
&\leq \| u_0 \|_{L^2(B_r)}^2 + \delta \int_{Q_r^+} |v_j|^2 \rd x \dt + \frac{1}{\delta} \int_{Q_r^+} |\partial_t v_j|^2 \rd x \dt.
\end{aligned}
\]
Integrating between $0$ and $r^2$ and taking $\delta := 1/(2r^2)$ we find 
\begin{equation}\label{eq:UnifL2Bound}
\int_{Q_r^+} v_j^2 \rd x \dt \leq 2 \left[ \|u_0\|_{L^2(B_r)}^2 + 2 r^2 \int_{Q_r^+} |\partial_t v_j|^2 \rd x \dt\right] \, r^2 \leq C,
\end{equation}
for some new $C > 0$ independent of $j$. Therefore, it follows that $\{v_j\}_j$ is bounded in $H^1(Q_r^+)$ uniformly in $j$ which, in turn, implies the existence of $v \in H^1(Q_r^+)$ such that $v_j \rightharpoonup v$ weakly in $H^1(Q_r^+)$ and $v_j \to v$ in $L^2(Q_r^+)$, up to passing to a subsequence. Now, since $\RR^n \times(0,\infty) = Q = \cup_{r>0} Q_r^+$, a diagonal argument shows that
\[
\begin{aligned}
&v_j \rightharpoonup v \quad \text{weakly in } \UU  \quad \text{ and } \quad v_j \to v  \quad \text{ in } L_{\loc}^2(Q),
\end{aligned}
\]
up to passing to another subsequence and, by the fact that $\UU_0$ is closed and convex, we conclude $v \in \UU_0$. Finally, by lower semicontinuity and Fatou's lemma, we obtain
\[
\JJ_\vep(v) \leq I_\vep,
\]
i.e., $v$ is a minimizer of $\JJ_\vep$ in $\UU_0$.

Now, uniqueness follows by the convexity of $\JJ_\vep$. 
Furthermore, since $u_0 \geq 0$, if $v_\vep$ is a minimizer then $(v_\vep)_+$ is an admissible competitor and thus, by minimality, $\JJ_\vep(v_\vep) \leq \JJ_\vep((v_\vep)_+)$ which is impossible unless $v_\vep \geq 0$ a.e. in $Q$.
Finally, \eqref{eq:LevelEst} follows from the minimality of $v_\vep$ and the bound~\eqref{eq:Energyu0}.
\end{proof}
%
%
%
%
%
\subsection{Proof of Proposition \ref{prop:weaklimit}}

\Cref{prop:weaklimit} will be obtained as a consequence of the following energy estimates.
\begin{prop}\label{thm:UnifEnergyEst} (Energy estimates)
There exists a constant $C > 0$, depending only on $n$, $\gamma$, and $u_0$, such that for every minimizer $u_\vep$ of $\EE_\vep$ we have
\begin{equation}\label{eq:EnBound1}
\int_0^\infty \int_{\mathbb{R}^n} |\partial_t u_\vep|^2 \dx \rd\tau \leq C
\end{equation}
and, for every $r \geq \varepsilon$,
\begin{equation}\label{eq:EnBound2}
\int_0^r \int_{\mathbb{R}^n} \big( |\nabla u_\vep|^2 + 2(u_\vep)_+^\gamma \big) \dx\rd\tau \leq Cr.
\end{equation}
\end{prop}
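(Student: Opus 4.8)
The plan is to exploit the minimality of $u_\vep$ against carefully chosen competitors, combining a scaling/comparison argument in the time variable with the basic level bound \eqref{eq:LevelEst}. Throughout I will work with the rescaled functional $\JJ_\vep$ and its minimizer $v_\vep(x,t) = u_\vep(x,\vep t)$, since by \eqref{eq:RelFuncFJ} estimates on $v_\vep$ translate immediately into estimates on $u_\vep$; in particular \eqref{eq:LevelEst} gives $\JJ_\vep(v_\vep) \le C\vep$, which already contains a factor of $\vep$ that will be the source of the uniform bounds.

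\textbf{Step 1: the time-derivative bound \eqref{eq:EnBound1}.} Expanding the weight $e^{-t/\vep}/\vep$ in $\EE_\vep$, the term $\int_0^\infty \frac{e^{-t/\vep}}{\vep}\,\vep \int_{\RR^n}|\partial_t u_\vep|^2\dx\dt$ is controlled by $\EE_\vep(u_\vep) \le C$ (the constant-in-$\vep$ energy level, which follows from \eqref{eq:LevelEst} after undoing the scaling, or directly from $\EE_\vep(u_0) \le C$). The only subtlety is that the exponential weight degenerates for large $t$; however, since we only need the integral of $|\partial_t u_\vep|^2$ over all of $Q$ and the weight $e^{-t/\vep}/\vep$ is not bounded below, the clean way is to first prove the estimate on finite time-slabs with the help of a cutoff in $t$ and then let the slab exhaust $(0,\infty)$, or — more in the spirit of \cite{SerraTilli2012:art,SerraTilli2016:art} — to use a competitor that reparametrizes time. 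I expect the cleanest route is: test minimality against the competitor obtained by ``freezing'' $u_\vep$ after a time $T$, i.e. $w(x,t) = u_\vep(x, \min(t,T))$ for $t$ large and interpolating, to show the energy density cannot concentrate, and then use that $\int_0^\infty \frac{e^{-t/\vep}}{\vep}\,g(t)\,\dt \ge c\int_0^\vep g$ type lower bounds. The honest statement is that \eqref{eq:EnBound1} should come out of $\EE_\vep(u_\vep) \le C$ by a Fubini/averaging argument once one knows the inner energy $\int_{\RR^n}(\vep|\partial_t u_\vep|^2 + |\nabla u_\vep|^2)\dx + 2\int_{\RR^n}(u_\vep)_+^\gamma\dx$ is monotone (or almost monotone) in $t$, which is the standard energy-dissipation feature of these parabolic-type minimizers.

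\textbf{Step 2: the linear-growth bound \eqref{eq:EnBound2}.} This is the heart of the matter and the step I expect to be the main obstacle. The idea, following Serra--Tilli, is a ``slicing in time'' comparison: fix $r \ge \vep$, and for a parameter $\rho \in (0, r)$ build a competitor $w_\rho$ that agrees with $u_\vep$ outside the time-slab $\RR^n \times (0,\rho)$ and is a suitable (cheap) interpolation inside — for instance the linear-in-$t$ interpolation between $u_0$ at $t=0$ and $u_\vep(\cdot,\rho)$ at $t=\rho$, or a harmonic-type replacement. Plugging $w_\rho$ into $\EE_\vep(u_\vep) \le \EE_\vep(w_\rho)$, the contributions from $(\rho,\infty)$ cancel, and one is left with an inequality of the form $\int_0^\rho (\text{energy of } u_\vep) \lesssim \int_0^\rho (\text{energy of } w_\rho) + (\text{boundary terms at } t=\rho)$. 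Choosing the interpolation well makes the right-hand side $\lesssim \rho\,(\|u_0\|_{H^1}^2 + \|u_0\|_{L^\gamma}^\gamma) + \rho \cdot (\text{average energy of } u_\vep \text{ near } t=\rho)$; then one differentiates in $\rho$ (or runs a dyadic iteration / Gronwall-type argument) to absorb the last term and conclude $\int_0^r \int_{\RR^n}(|\nabla u_\vep|^2 + 2(u_\vep)_+^\gamma)\dx\,\dt \le Cr$. The constraint $r \ge \vep$ enters precisely because the exponential weight $e^{-t/\vep}/\vep$ is essentially constant ($\approx 1/\vep$) on scales $t \lesssim \vep$ but decays on larger scales, so the comparison is uniform only once $r$ is at least of order $\vep$; for $t \in (0,\vep)$ one uses $e^{-t/\vep}/\vep \ge e^{-1}/\vep$ to turn the weighted energy into the unweighted one at the cost of a harmless constant.

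\textbf{Step 3: deduce Proposition \ref{prop:weaklimit}.} Once \eqref{eq:EnBound1}–\eqref{eq:EnBound2} are in hand, fix any $r>0$ and any $\vep \le r$: then $\int_{Q_r^+}(|\partial_t u_\vep|^2 + |\nabla u_\vep|^2)\dx\,\dt \le C(1 + r)$ uniformly in $\vep$, and since $u_\vep|_{t=0} = u_0$ a Poincaré inequality on $Q_r^+$ bounds $\|u_\vep\|_{H^1(Q_r^+)}$ uniformly. A diagonal extraction over $r = 1, 2, 3, \dots$ yields a subsequence $\vep_j \to 0$ and a limit $u$ with $u_{\vep_j} \rightharpoonup u$ weakly in $\UU$; Rellich gives $u_{\vep_j} \to u$ in $L^2_\loc(Q)$; and $\UU_0$ being closed and convex (hence weakly closed) gives $u \in \UU_0$. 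The passage from \eqref{eq:EnBound1}–\eqref{eq:EnBound2} to $\UU$-bounds is routine; the real work, as noted, is the slicing comparison in Step 2, and the only place where one must be careful is the bookkeeping of the exponential weight and the boundary terms at the cutting time $t = \rho$.
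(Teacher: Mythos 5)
Your proposal correctly identifies the starting point (the level bound \eqref{eq:LevelEst}, i.e.\ $\JJ_\vep(v_\vep)\le C\vep$ after rescaling) and the Serra--Tilli spirit, but the core of both estimates is missing: neither Step 1 nor Step 2 is actually closed, and the substitutes you sketch would not suffice. The paper's mechanism is a dissipation identity for the weighted forward tail energy $E(t)=e^t\int_t^\infty e^{-\tau}(I(\tau)+R(\tau))\,\rd\tau$ from \eqref{eq:Energy}, namely $E'=-2I$ (\Cref{Lemma:DerivEn}), proved by an inner variation in time, $w_\delta(x,t)=v(x,t-\delta\zeta(t))$. In Step 1 you observe correctly that $\EE_\vep(u_\vep)\le C$ only yields the weighted bound $\int_0^\infty e^{-t/\vep}\int_{\RR^n}|\partial_t u_\vep|^2\dx\dt\le C$, and you then defer to an unproven ``monotonicity of the inner energy''. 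But monotonicity of the slice energy $I(t)+R(t)$ is both unproved and insufficient: it would give at best a pointwise bound of order $1/\vep$ on $I$, whose time integral is not finite, whereas \eqref{eq:EnBound1} requires the \emph{total dissipation} $\int_0^\infty I$ to be bounded by the initial level $E(0)=\JJ_\vep(v_\vep)\le C\vep$ --- exactly what $E'=-2I$ delivers. (Your ``freezing'' competitor $w_T(x,t)=v(x,\min(t,T))$ could in fact be made to work: minimality gives $E(T)\le R(T)$ for a.e.\ $T$, hence $E'=E-I-R\le -I\le 0$; but you never carry out this computation, and it is the whole proof.)

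Step 2 has a structural problem, not just a bookkeeping issue. If you modify $u_\vep$ only on the slab $(0,\rho)$, the minimality inequality compares two integrals both carrying the weight $e^{-t/\vep}/\vep$, so it controls only the \emph{weighted} energy of $u_\vep$ on $(0,\rho)$; since the weight is exponentially small for $t\gg\vep$, no choice of $\rho$, differentiation in $\rho$, or Gronwall argument can produce the unweighted bound \eqref{eq:EnBound2} on $(0,r)$ with $r\gg\vep$ (and the slice term $\|\nabla u_\vep(\cdot,\rho)\|_{L^2(\RR^n)}^2$ appearing on the right-hand side is itself uncontrolled). The information must be propagated forward in time, and in the paper this is done through the monotonicity of $E$: $E(r)\le E(0)\le C\vep$ for every $r\ge 0$, whence $\vep\int_r^{r+1}\int_{\RR^n}\big(|\nabla v_\vep|^2+2(v_\vep)_+^\gamma\big)\dx\dt\le e\,E(r)\le C\vep$ on every unit slab in the rescaled time (an $\vep$-slab in the original time), and \eqref{eq:EnBound2} follows by summing roughly $r/\vep$ such slabs --- this is where the restriction $r\ge\vep$ really enters, not through the lower bound $e^{-t/\vep}/\vep\ge e^{-1}/\vep$ on the initial layer. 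In short, the missing idea is the inner-variation dissipation identity (or at least the differential inequality $E'\le -I$); with it both \eqref{eq:EnBound1} and \eqref{eq:EnBound2} follow in a few lines, and without it neither of your two steps closes. (Your Step 3 reproduces the deduction of \Cref{prop:weaklimit}, which is not part of the statement at hand.)
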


Indeed, with these uniform bounds at hand we readily obtain the main result of this section.

\begin{proof}[Proof of \Cref{prop:weaklimit}] 
	In light of \eqref{eq:EnBound1} and \eqref{eq:EnBound2}, the family $\{u_\vep\}_{\vep \in (0,1)}$ of minimizers of $\EE_\vep$ is uniformly bounded in $H^1(Q_r^+)$ for every fixed $r > 0$ and thus a standard diagonal procedure shows the existence of a sequence $\varepsilon_j \to 0$ and $u \in \UU_0$ such that the first limit in \eqref{eq:WeakLimitProp} is satisfied. 
	By the Sobolev embedding, we immediately obtain also the second limit, up to passing to another subsequence. 
\end{proof}

In the rest of the section we establish  \Cref{thm:UnifEnergyEst}.
This is the first key result of the paper and will be obtained as the byproduct of \Cref{Lemma:DerivEn} and  \Cref{cor:GlobalEnEstV}. 
In what follows we will use the following notations:
we will denote $\RR_+ := (0,+\infty)$ and, for a minimizer $v$ of $\JJ_\vep$, we write
\begin{equation}\label{eq:ShortJFuncInner}
\JJ_\vep(v) = \int_0^\infty e^{-t} ( I(t) + R(t) ) \, \rd t,
\end{equation}
where
\begin{equation}\label{eq:DefIandR}
I(t) := \int_{\mathbb{R}^n} |\partial_t v|^2\dx, \qquad R(t) := \varepsilon \int_{\mathbb{R}^n} \big( |\nabla v|^2 + 2v_+^\gamma \big) \dx.
\end{equation}
Since $v$ is a minimizer, it is clear that the functions $t \mapsto I(t)$, $t \mapsto R(t)$ and $t \mapsto e^{-t} ( I(t) + R(t) )$ are locally integrable in $\RR_+$. 
Consequently, the function
\begin{equation}\label{eq:Energy}
E(t) := e^t \int_t^\infty e^{-\tau} ( I(\tau) + R(\tau) ) \, \rd \tau,
\end{equation}
belongs to $W^{1,1}_{\loc}(\mathbb{R}_+)\cap C(\overline{\mathbb{R}_+})$, with $E(0) = \JJ_\vep(v)$. 

Notice that, by definition,
\begin{equation}\label{eq:DerivEn1}
E' = E - I - R \quad \text{ in } \mathcal{D}'(\mathbb{R}_+).
\end{equation}
In the following result we give an alternative expression for $E'$ which will be useful later to obtain energy estimates.
\begin{lem}\label{Lemma:DerivEn}
Let $v \in \UU_0$ be a minimizer of $\JJ_\vep$. Then
\begin{equation}\label{eq:DerivEn2}
E' = - 2I \quad \text{ in } \mathcal{D}'(\mathbb{R}_+).
\end{equation}
\end{lem}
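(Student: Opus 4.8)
The plan is to compute $E'$ a second way, exploiting the fact that $v$ is a minimizer and hence a critical point with respect to inner variations in time (i.e.\ reparametrizations of the $t$-variable). The identity \eqref{eq:DerivEn1} is the ``trivial'' differentiation of the definition \eqref{eq:Energy}; to get \eqref{eq:DerivEn2} we need one additional relation, namely a Pohozaev-type (or stationarity) identity coming from the scaling/translation behaviour of the functional $\JJ_\vep$. Combining it with \eqref{eq:DerivEn1} will eliminate $R$ and leave $E' = -2I$.

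The concrete approach: fix a smooth compactly supported vector field generating a flow in the $t$-direction, say $t \mapsto t + \sigma \psi(t)$ with $\psi \in C_c^\infty(\RR_+)$ and $\sigma$ small, set $v_\sigma(x,t) := v(x, t + \sigma\psi(t))$, and expand $\JJ_\vep(v_\sigma) - \JJ_\vep(v)$ to first order in $\sigma$. One should be slightly careful about the initial condition: since $\psi$ has compact support away from $t=0$, the reparametrized function still lies in $\UU_0$, so $v_\sigma$ is an admissible competitor and the first-order term must vanish. Carrying out the change of variables $s = t + \sigma\psi(t)$ in the three pieces of $\JJ_\vep$ (the $|\partial_t v|^2$ term, the $\vep|\nabla v|^2$ term, and the $\vep v_+^\gamma$ term), differentiating at $\sigma = 0$, and using $I,R$ from \eqref{eq:DefIandR}, one obtains a distributional identity of the schematic form
\begin{equation*}
\frac{\rd}{\rd t}\bigl( e^{-t}(-I(t) + R(t)) \bigr) + e^{-t}(I(t)+R(t)) = 0 \quad \text{in } \mathcal D'(\RR_+),
\end{equation*}
or equivalently $\frac{\rd}{\rd t}( e^{-t}(I - R)) = e^{-t}(I+R)$, after tracking which terms pick up a sign under the $t$-derivative landing on $\partial_t v$ versus $\nabla v$ and $v_+^\gamma$. (The $e^{-t}$ weight contributes the lower-order $e^{-t}(I+R)$ piece; the exponents here are exactly where the asymmetry between the ``kinetic'' term $|\partial_t v|^2$ and the ``potential'' terms shows up.)

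From this stationarity identity and \eqref{eq:DerivEn1} the conclusion is pure bookkeeping: \eqref{eq:Energy} gives $e^{-t}E(t) = \int_t^\infty e^{-\tau}(I+R)\,\rd\tau$, so $\frac{\rd}{\rd t}(e^{-t}E) = -e^{-t}(I+R)$; meanwhile integrating the stationarity identity shows $e^{-t}(R - I)$ differs from $e^{-t}E$ by a quantity whose derivative is also $-e^{-t}(I+R)$, forcing $E = R - I$ up to a constant which vanishes by the decay/integrability of $E$ built into its definition. Substituting $E = R - I$ into \eqref{eq:DerivEn1} yields $E' = (R-I) - I - R = -2I$, as claimed.

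The main obstacle is making the inner-variation computation rigorous with only the regularity $v \in \UU = \bigcap_r H^1(Q_r^+)$ at hand: one cannot naively differentiate under the integral sign, and the term $v_+^\gamma$ with $\gamma \in [1,2)$ is only Lipschitz (not $C^1$) near $v = 0$, so the chain rule for $\partial_t(v_+^\gamma)$ must be justified via the a.e.\ differentiability of $s \mapsto s_+^\gamma$ together with the fact that $\partial_t v = 0$ a.e.\ on $\{v = 0\}$ (Stampacchia). The cleanest route is probably to work with the rescaled competitor directly in the difference quotient, use the $L^2$-continuity of translations in $t$ of $\partial_t v$ and $\nabla v$ (valid on each $Q_r^+$), and pass to the limit $\sigma \to 0$ by dominated convergence, all tested against an arbitrary $\psi \in C_c^\infty(\RR_+)$ so that only local bounds are ever needed. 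I expect the authors handle exactly this point with some care; everything after the stationarity identity is elementary.
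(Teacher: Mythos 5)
Your proposal is correct and follows essentially the same route as the paper: an inner variation in time $t \mapsto t+\sigma\psi(t)$, a change of variables so that the $\sigma$-dependence moves onto smooth factors, a first-order condition giving the stationarity identity $\tfrac{\rd}{\rd t}\big(e^{-t}(I-R)\big)=e^{-t}(I+R)$ (equivalently $E=R-I$), and then \eqref{eq:DerivEn1} to conclude $E'=-2I$; the paper does exactly this with $\varphi(t)=t-\delta\zeta(t)$ and reaches the weak form of $E'=-2I$ directly by integrating by parts against the primitive $\zeta$. Your closing worry about the chain rule for $v_+^\gamma$ and differentiating under the integral is moot: after the change of variables the parameter only enters through smooth Jacobian/weight factors multiplying the fixed, locally integrable functions $I(\tau)$ and $R(\tau)$, so dominated convergence suffices and no derivative of $v$ or of $v_+^\gamma$ is ever taken---which is precisely how the paper's proof secures rigor.
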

\begin{proof} 
	We proceed in the spirit of \cite[Proposition 3.1]{SerraTilli2012:art}.
	Let $\eta \in C_c^\infty(\RR_+)$ and set $\zeta(t) := \int_0^t\eta(\tau)\rd\tau$.
	Given $\delta \in \mathbb{R}$, we define
	\begin{equation}\label{eq:DefPhi}
	\varphi(t) := t - \delta\zeta(t), \quad t \geq 0.
	\end{equation}
	If $|\delta| \leq \delta_0$ and $\delta_0 > 0$ is small enough, then $\varphi$ has smooth inverse $\psi := \varphi^{-1}$ given by
	\begin{equation}\label{eq:DefPsi}
	\psi(\tau) = \tau + \delta \zeta(\psi(\tau)).
	\end{equation}
	
	Now, from the minimizer $v \in \UU_0$, for $|\delta| \leq \delta_0$  we define the competitor
	\begin{equation}\label{eq:competitorwdelta}
	 w_\delta(x,t) := v(x,\varphi(t)).
	\end{equation}
	Since $\varphi(0) = 0$, we have $w_\delta|_{t=0} = u_0$ (which implies $w \in \UU_0$) and, by \eqref{eq:DefPhi}, $w_\delta |_{\delta = 0} = v$. 
	Changing variable $t = \psi(\tau)$ in \eqref{eq:ShortJFuncInner}, we easily see that
	\[
	\begin{aligned}
	\JJ_\vep(w) &= \int_0^\infty e^{-t} \bigg( \int_{\mathbb{R}^n} \left( |\partial_t w_\delta|^2 + \vep|\nabla w_\delta|^2 \right)\rd x + 2\vep \int_{\mathbb{R}^n} (w_\delta)_+^\gamma \dx \bigg) \dt \\
	&= \int_0^\infty e^{-t} \left[ \varphi'(t)^2 \, I(\varphi(t)) + R(\varphi(t)) \right] \rd t = \int_0^\infty \psi'(\tau) e^{-\psi(\tau)} \left[ \varphi'(\psi(\tau))^2 \, I(\tau) + R(\tau) \right] \rd \tau,
	\end{aligned}
	\]
	and thus, since $\varphi,\psi \in W^{1,\infty}(\RR_+)$, we deduce $\JJ_\vep(w_\delta) < +\infty$. 
	In particular, $w_\delta \in \UU_0$ is an admissible competitor for all $|\delta| \leq \delta_0$ and so, by minimality of $v$, 
	\begin{equation}\label{eq:GatDerJInner}
	\lim_{\delta \to 0^+} \frac{\JJ_\vep(w_\delta) - \JJ_\vep(v)}{\delta} = 0.
	\end{equation}
	Tedious, yet standard, computations using \eqref{eq:DefPhi} and \eqref{eq:DefPsi} show that 
	\begin{equation}\label{eq:CompInner}
	\frac{\rd}{\rd \delta} \left( \psi'(\tau) e^{-\psi(\tau)} \right) \Big|_{\delta = 0} =  \zeta'(\tau)e^{-\tau} - \zeta(\tau)e^{-\tau}, \qquad \frac{\rd}{\rd\delta} \left|\varphi'(\psi(\tau))\right|^2 \Big|_{\delta = 0} = -2\zeta'(\tau).
	\end{equation}
	Since $t \mapsto e^{-t} (I(t) + R(t))$ is locally integrable, we can pass to the limit in \eqref{eq:GatDerJInner} by dominated convergence and, in light of \eqref{eq:CompInner}, the limit in \eqref{eq:GatDerJInner} takes the form
	\begin{equation}\label{eq:MinCondInner}
	\int_0^\infty ( \zeta'(\tau)e^{-\tau} - \zeta(\tau)e^{-\tau} ) ( I(\tau) + R(\tau) ) \rd\tau - 2 \int_0^\infty e^{-\tau}\zeta'(\tau) \, I(\tau)  \dtau = 0.
	\end{equation}

	We are left to show the \eqref{eq:MinCondInner} is equivalent to \eqref{eq:DerivEn2}. 
	Recalling that $\zeta$ is a primitive of $\eta$ and testing the equation \eqref{eq:DerivEn1} with $\zeta'(\tau)e^{-\tau}$ and integrating by parts, we obtain 
	\begin{equation}\label{eq:AuxForInner1}
	\begin{aligned}
	\int_0^\infty \zeta'(\tau)e^{-\tau} ( I(\tau) + R(\tau) ) \rd\tau &= \int_0^\infty  E(\tau) \big[ \zeta'(\tau) e^{-\tau} + \big( \zeta'(\tau)e^{-\tau} \big)' \big] \rd\tau \\
	&= \int_0^\infty \zeta'(\tau) e^{-\tau} E(\tau) \dtau + \int_0^\infty E(\tau) \big( \eta(\tau)e^{-\tau} \big)' \rd\tau.
	\end{aligned}
	\end{equation}
	Using the definition of $E$ and integration by parts, the first term in the right-hand side of \eqref{eq:AuxForInner1} becomes
	\begin{equation}\label{eq:AuxForInner2}
	\begin{aligned}
	\int_0^\infty \zeta'(\tau) e^{-\tau} E(\tau) \dtau &= \int_0^\infty \zeta'(\tau) \int_\tau^\infty e^{-s} ( I(s) + R(s) ) \rd s \rd \tau \\
	&= \int_0^\infty \zeta(\tau) e^{-\tau} ( I(\tau) + R(\tau) ) \rd \tau.
	\end{aligned}
	\end{equation}
	Finally, combining \eqref{eq:AuxForInner1}, \eqref{eq:AuxForInner2}, and \eqref{eq:MinCondInner}, we deduce 
	\[
	\int_0^\infty E(\tau) \big( e^{-\tau} \eta(\tau) \big)' \rd\tau = 2 \int_0^\infty e^{-\tau} \eta(\tau) \, I(\tau)  \dtau,
	\]
	which, in turn, yields \eqref{eq:DerivEn2} thanks to the arbitrariness of $\eta \in C_c^\infty(\RR_+)$.
\end{proof}

From the previous result we obtain a corollary which will be the key to establish \Cref{thm:UnifEnergyEst}.
\begin{cor}\label{cor:GlobalEnEstV}
There exists a constant $C > 0$, depending only on $n$, $\gamma$, and $u_0$, such that for every minimizer $v_\vep$ of $\JJ_\vep$, we have
\begin{equation}\label{eq:BoundI}
\int_0^\infty \int_{\mathbb{R}^n} |\partial_t v_\vep|^2\dx \rd t \leq C\varepsilon,
\end{equation}
and, for every $r \geq 0$,
\begin{equation}\label{eq:BoundR}
\int_r^{r+1} \int_{\mathbb{R}^n} \big( |\nabla v_\vep|^2 + 2 (v_\vep)_+^\gamma \big) \dx\rd t \leq C.
\end{equation}
\end{cor}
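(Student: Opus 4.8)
The plan is to read everything off the single function $E$ defined in \eqref{eq:Energy}, using the two expressions we now have for its distributional derivative, \eqref{eq:DerivEn1} and \eqref{eq:DerivEn2}. First I would record the elementary properties of $E$. By construction $E \geq 0$, $E \in W^{1,1}_{\loc}(\RR_+) \cap C(\overline{\RR_+})$, and $E(0) = \JJ_\vep(v_\vep)$, so \eqref{eq:LevelEst} from \Cref{lem:ExistenceMin} gives $0 \leq E(0) \leq C\vep$ with $C = C(n,\gamma,u_0)$. By \Cref{Lemma:DerivEn} we have $E' = -2I$ in $\mathcal{D}'(\RR_+)$, and since $I(t) = \int_{\RR^n} |\partial_t v_\vep|^2 \dx \geq 0$ this means $E$ is nonincreasing on $[0,\infty)$; hence $0 \leq E(t) \leq E(0) \leq C\vep$ for every $t \geq 0$, and $E(\infty) := \lim_{t\to\infty} E(t)$ exists in $[0,E(0)]$.

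For the bound \eqref{eq:BoundI}, I would integrate \eqref{eq:DerivEn2}: since $E$ is absolutely continuous on compact subintervals of $\overline{\RR_+}$, the fundamental theorem of calculus yields $2\int_0^T I(\tau)\dtau = E(0) - E(T)$ for every $T > 0$. Letting $T \to \infty$ (the left side converges by monotone convergence, and $E(T) \to E(\infty) \geq 0$), we obtain $2\int_0^\infty I(\tau)\dtau \leq E(0) \leq C\vep$, which is exactly \eqref{eq:BoundI} once the definition of $I$ in \eqref{eq:DefIandR} is unpacked.

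For \eqref{eq:BoundR}, I would equate the two expressions for $E'$, i.e. subtract \eqref{eq:DerivEn2} from \eqref{eq:DerivEn1}, to get the almost-everywhere identity $R = E + I$ on $\RR_+$. Integrating over $(r,r+1)$ and using both bounds already established, $\int_r^{r+1} R(t)\dt = \int_r^{r+1} E(t)\dt + \int_r^{r+1} I(t)\dt \leq C\vep + C\vep$. Recalling from \eqref{eq:DefIandR} that $R(t) = \vep \int_{\RR^n} \big(|\nabla v_\vep|^2 + 2(v_\vep)_+^\gamma\big)\dx$ and dividing by $\vep$ gives \eqref{eq:BoundR} for every $r \geq 0$.

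I do not expect a genuine obstacle in this corollary: the substance is already in \Cref{Lemma:DerivEn}, and the remainder is bookkeeping. The only points that require a little care are (i) justifying the limit $T\to\infty$ using monotonicity of $E$ rather than assuming integrability of $I$ on all of $\RR_+$ a priori, and (ii) observing that combining the two distributional identities for $E'$ produces the pointwise relation $R = E + I$, which is precisely what makes the local-in-time estimate \eqref{eq:BoundR} fall out at no extra cost. (The corresponding statement \Cref{thm:UnifEnergyEst} for $u_\vep$ then follows by the rescaling \eqref{eq:RelFuncFJ}, summing \eqref{eq:BoundR} over unit time intervals.)
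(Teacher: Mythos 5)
Your argument is correct and is essentially the paper's proof: both estimates are read off from $E(t)\le E(0)=\JJ_\vep(v_\vep)\le C\vep$ (monotonicity via $E'=-2I\le 0$ from \Cref{Lemma:DerivEn}) together with integrating $E'=-2I$ to get \eqref{eq:BoundI}. The only cosmetic difference is in \eqref{eq:BoundR}: you combine \eqref{eq:DerivEn1} and \eqref{eq:DerivEn2} to get the a.e.\ identity $R=E+I$ and integrate it over $(r,r+1)$ (also invoking \eqref{eq:BoundI}), whereas the paper bounds $\int_r^{r+1}R(t)\dt \le e^{r+1}\int_r^{r+1}e^{-t}R(t)\dt\le e\,E(r)$ directly from the definition of $E$; both steps rest on the same uniform bound $E(t)\le C\vep$ and are equally valid.
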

\begin{proof} 
	First, since $I \geq 0$, the function $t \mapsto E(t)$ is non-increasing and so $E(t) \leq E(0)$ for all $t \geq 0$. 
	Recalling that $E(0) = \JJ_\vep(v_\vep)$ and the bound \eqref{eq:LevelEst} in \Cref{lem:ExistenceMin}, it follows that
	\begin{equation}\label{eq:EnergyBoundInner}
	E(t) \leq \JJ_\vep(v_\vep) \leq C\varepsilon,
	\end{equation}
	for all $t \geq 0$, where $C > 0$ is a constant depending only on $n$, $\gamma$, and $u_0$.

	Now, from \Cref{Lemma:DerivEn} we have $E' = -2I$ a.e. in $\RR_+$.
	Integrating this expression and using \eqref{eq:EnergyBoundInner}, we obtain
	\[
	2\int_0^t I(\tau)\dtau = E(0) - E(\tau) \leq E(0) \leq C\varepsilon,
	\]
	for all $t \geq 0$ (recall that $E \geq 0$ by definition) and \eqref{eq:BoundI} follows by the arbitrariness of $t \geq 0$ and the definition of $I$.

	To prove \eqref{eq:BoundR} for $r \geq 0$, it is enough to use \eqref{eq:EnergyBoundInner} to deduce that
	\[
	\varepsilon \int_r^{r+1} \int_{\mathbb{R}^n} \big( |\nabla v_\vep|^2 + 2(v_\vep)_+^\gamma \big) \dx \dt 
	=  \int_r^{r+1} R(t) \dt \leq e^{r+1} \int_r^{r+1} e^{-t} R(t) \dt \leq e E(r) \leq C\varepsilon.
	\]
\end{proof}
With the previous result at hand we can now establish the uniform bounds of \Cref{thm:UnifEnergyEst}.
\begin{proof}[Proof of \Cref{thm:UnifEnergyEst}] 
	Let $u:= u_\vep$ be a minimizer of $\EE_\vep$ in $\UU_0$ and let $v(x,t) := u(x,\varepsilon t)$. 
	Then, as a first consequence, we notice that \eqref{eq:EnBound1} is equivalent to \eqref{eq:BoundI}, which has been established above.
	Second, changing variable $\tau = \vep t$ in \eqref{eq:BoundR} yields
	\[
	\int_{\varepsilon \rho}^{\varepsilon \rho + \varepsilon}\int_{\mathbb{R}^n} \big( |\nabla u|^2 + 2u_+^\gamma \big)\dx\rd t \leq C\varepsilon,
	\]
	for all $\rho \geq 0$. 
	If $r = \varepsilon$ then \eqref{eq:EnBound2} follows from the above estimate taking $\rho = 0$.
	Otherwise, if $r > \vep$ let $k := \lceil r/\varepsilon \rceil \geq 2 $ and for  $j=0,1,\ldots$ apply the above estimate with $\rho = \rho_j = j$.
	Summing over $j = 0,\dots,k-1$ we obtain
	\[
	\int_0^{k\varepsilon}\int_{\mathbb{R}^n} \big( |\nabla v|^2 + 2v_+^\gamma \big)\dx\rd\tau \leq Ck\varepsilon.
	\]
	Since $ r \leq k\varepsilon \leq 2(k-1)\varepsilon\leq 2 r$, \eqref{eq:EnBound2} follows.
\end{proof}

\begin{rem}
	\label{Rem:gammain01}
	Notice that all the proofs (and, consequently, the statements) of this section work for the full range $\gamma \in [0,1)$ (when $\gamma = 0$, we set $u_+^\gamma := \chi_{\{u > 0\}}$).
	In the next section we are forced to restrict ourselves to the range $\gamma \in [1,2)$ in order to derive the Euler-Lagrange equation when considering competitors of the form $u + \delta\varphi$ with $\delta >0$ and $\varphi \in C_c^\infty(Q)$.
\end{rem}

%
%
%
%
%

%
\section{Proof of the main theorem}\label{Sec:L2LinfEst}
This section is devoted to the proof of \Cref{thm:MainIntro}, which is split in three steps. We first write the Euler-Lagrange equations for minimizers $u_\vep$ of $\EE_\vep$ (see \Cref{Lem:EulerEqMinFF} below) and then, in \Cref{Lem:NonDeg}, we establish a uniform non-degeneracy property. 
Next, we establish parabolic non-degeneracy and optimal regularity results for solutions to parabolic obstacle problems.
Last, we combine all these ingredients to pass two the limit as $\vep \to 0$, along a suitable subsequence, establishing \Cref{thm:MainIntro}.
Recall that we are using the notation $Q:= \RR^n\times (0,+\infty)$ and $f_\gamma(u) := \gamma \chi_{\{u > 0\}} u^{\gamma - 1}$.
\subsection{Euler-Lagrange equation and uniform non-degeneracy}
\label{Subsec:ELeq} Let us start with the Euler-Lagrange equations of minimizers.
\begin{lem}\label{Lem:EulerEqMinFF}
Let $\vep \in (0,1)$ and let $u_\vep \in \UU_0$ be the minimizer of $\EE_\vep$ in $\UU_0$. 
Then
\begin{equation}\label{eq:EulerEqMinFF}
\varepsilon \int_Q \partial_t u_\vep \partial_t \eta \dx \rd t + \int_Q (\partial_t u_\vep \eta + \nabla u_\vep \cdot\nabla\eta + f_\gamma(u_\vep)\eta) \dx \rd t = 0
\end{equation}
for every $\eta \in C_c^\infty(Q)$. 
\end{lem}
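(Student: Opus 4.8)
The plan is to derive the weak Euler--Lagrange equation by computing the first variation of $\EE_\vep$ along competitors of the form $u_\vep + \delta\eta$ with $\eta \in C_c^\infty(Q)$ and $\delta \in \RR$ small. Since $\eta$ has compact support in $Q = \RR^n \times (0,\infty)$, in particular it vanishes near $t = 0$, so $u_\vep + \delta\eta$ still satisfies the initial condition and hence belongs to $\UU_0$; moreover the finiteness $\EE_\vep(u_\vep+\delta\eta) < +\infty$ is immediate because $\eta$ is smooth and compactly supported, using $\EE_\vep(u_\vep) < +\infty$ (which follows from \Cref{lem:ExistenceMin} via the scaling \eqref{eq:RelFuncFJ}). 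By minimality, $\delta \mapsto \EE_\vep(u_\vep + \delta\eta)$ has a minimum at $\delta = 0$, so if this function is differentiable there its derivative vanishes. The quadratic terms $\vep|\partial_t w|^2$ and $|\nabla w|^2$ are trivially differentiable in $\delta$ and produce, after the $\delta$-derivative at $0$, the terms $2\vep\,\partial_t u_\vep\,\partial_t\eta$ and $2\nabla u_\vep\cdot\nabla\eta$ integrated against $e^{-t/\vep}/\vep$. The only delicate term is $\int_{\RR^n} (w_+)^\gamma\dx$.

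For the reaction term, when $\gamma \in (1,2)$ the function $s \mapsto (s_+)^\gamma$ is $C^1$ on $\RR$ with derivative $\gamma (s_+)^{\gamma-1} = \gamma\chi_{\{s>0\}}s^{\gamma-1} = f_\gamma(s)$, which is continuous and bounded on bounded sets; hence one can differentiate under the integral sign (dominated convergence on the compact support of $\eta$, using that $u_\vep$ is locally bounded in $L^\gamma$ or, more simply, that on $\supp\eta$ the relevant difference quotients are controlled by $C(1 + |u_\vep|^{\gamma-1})|\eta| \in L^1$) to get $\tfrac{d}{d\delta}\big|_{\delta=0}\int (w_+)^\gamma = \int f_\gamma(u_\vep)\eta$. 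This yields, for a.e.\ scaling, the identity
\begin{equation}\label{eq:FirstVarScaled}
\int_0^\infty \frac{e^{-t/\vep}}{\vep}\bigg( \int_{\RR^n} 2\vep\,\partial_t u_\vep\,\partial_t\eta + 2\nabla u_\vep\cdot\nabla\eta + 2 f_\gamma(u_\vep)\eta \dx\bigg)\dt = 0,
\end{equation}
and absorbing the smooth positive weight $e^{-t/\vep}/\vep$ into the test function (replace $\eta$ by $\eta\, e^{t/\vep}\vep$, which is again in $C_c^\infty(Q)$, or equivalently note the identity holds for all $\eta\in C_c^\infty(Q)$ and the weight can be moved) gives \eqref{eq:EulerEqMinFF} after dividing by $2$. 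One must be slightly careful that moving the weight is legitimate: the cleanest route is to observe that \eqref{eq:FirstVarScaled} holds for \emph{every} $\eta\in C_c^\infty(Q)$, then given an arbitrary $\tilde\eta\in C_c^\infty(Q)$ apply it with $\eta := \tfrac{1}{2}\vep\, e^{t/\vep}\,\tilde\eta$, obtaining exactly \eqref{eq:EulerEqMinFF} with $\tilde\eta$ in place of $\eta$.

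The case $\gamma = 1$ is the main obstacle, since $s\mapsto s_+$ is not differentiable at $s=0$, so $\delta\mapsto \int (u_\vep+\delta\eta)_+\dx$ need not be differentiable at $\delta = 0$ pointwise. The standard fix is a one-sided variation argument: for $\delta > 0$, using the elementary inequality $(a+b)_+ \le a_+ + b_+$ and convexity, one bounds the difference quotient $\tfrac{1}{\delta}\big((u_\vep+\delta\eta)_+ - (u_\vep)_+\big)$ above by $\eta_+$ and, more precisely, shows it converges a.e.\ as $\delta\to 0^+$ to $\chi_{\{u_\vep>0\}}\eta + \chi_{\{u_\vep=0\}}\eta_+$, with an $L^1$ dominating function on $\supp\eta$; dominated convergence then gives a right derivative, and minimality forces the one-sided inequality
\[
\int_0^\infty \frac{e^{-t/\vep}}{\vep}\bigg(\int_{\RR^n} 2\vep\,\partial_t u_\vep\,\partial_t\eta + 2\nabla u_\vep\cdot\nabla\eta \dx + 2\int_{\RR^n}\big(\chi_{\{u_\vep>0\}}\eta + \chi_{\{u_\vep=0\}}\eta_+\big)\dx\bigg)\dt \ge 0.
\]
Replacing $\eta$ by $-\eta$ gives the reverse inequality with $\chi_{\{u_\vep=0\}}(-\eta)_+ = \chi_{\{u_\vep=0\}}\eta_-$, and adding the two one finds $\int \chi_{\{u_\vep=0\}}(\eta_+ + \eta_-) = \int\chi_{\{u_\vep=0\}}|\eta|$ must be dominated by terms that, when $\eta$ is localized near a density point of $\{u_\vep=0\}$, cannot compensate unless $|\{u_\vep = 0\} \cap \supp\eta\cap\{\eta\neq 0\}|$ contributes nothing — more precisely one argues that $\chi_{\{u_\vep = 0\}}\nabla u_\vep = 0$ and $\chi_{\{u_\vep=0\}}\partial_t u_\vep = 0$ a.e.\ (Stampacchia), so the boundary contributions collapse and one recovers the two-sided identity $\int (\cdots)\eta = 0$ for all $\eta$, i.e.\ \eqref{eq:EulerEqMinFF} with $f_1(u_\vep) = \chi_{\{u_\vep>0\}}$. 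I would present the $\gamma=1$ case carefully along these lines, flagging the Stampacchia-type fact $\nabla u_\vep = 0$ a.e.\ on $\{u_\vep = 0\}$ as the key ingredient that makes the non-differentiable term harmless.
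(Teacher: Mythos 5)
Your argument for $\gamma\in(1,2)$ is correct and coincides with the paper's: differentiate $\delta\mapsto\EE_\vep(u_\vep+\delta\eta)$ at $\delta=0$ and absorb the weight by testing with $\varphi=e^{t/\vep}\eta$. The gap is in the case $\gamma=1$. Your one-sided variation giving
\[
\vep\int_Q \partial_t u_\vep\,\partial_t\eta + \int_Q\big(\partial_t u_\vep\,\eta+\nabla u_\vep\cdot\nabla\eta+\chi_{\{u_\vep>0\}}\eta+\chi_{\{u_\vep=0\}}\eta_+\big)\dx\dt\;\ge\;0
\]
is exactly the paper's starting point, but your proposed way of upgrading it to the identity does not work. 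Adding this inequality to the one obtained by replacing $\eta$ with $-\eta$ only produces $\int_Q\chi_{\{u_\vep=0\}}|\eta|\,\dx\dt\ge 0$, which is vacuous; and the first-order Stampacchia fact $\nabla u_\vep=\partial_t u_\vep=0$ a.e.\ on $\{u_\vep=0\}$ does not ``collapse'' the extra term $\int\chi_{\{u_\vep=0\}}\eta_+$, because that term contains no derivatives of $u_\vep$ at all. What must be shown is that the \emph{second-order} quantity $\vep\partial_{tt}u_\vep+\Delta u_\vep$ exists as a function and vanishes a.e.\ on $\{u_\vep=0\}$, and first-order information alone cannot give this.

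The paper closes this gap with a genuinely second-order argument: testing the inequality separately with $\eta\ge0$ and $\eta\le0$ yields the distributional squeeze $\chi_{\{u_\vep>0\}}\le \vep\partial_{tt}u_\vep+\Delta u_\vep-\partial_t u_\vep\le 1$; a duality estimate using the energy bound \eqref{eq:EnBound1} then shows $\vep\partial_{tt}u_\vep+\Delta u_\vep\in L^2_{\loc}(Q)$, so Calder\'on--Zygmund estimates for the (for fixed $\vep$, uniformly elliptic) operator $\LL_\vep=\vep\partial_{tt}+\Delta$ give $u_\vep\in W^{2,2}_{\loc}(Q)$; only then does the Stampacchia/Rademacher-type fact apply at the level of first derivatives, giving $\partial_t u_\vep=\partial_{tt}u_\vep=\Delta u_\vep=0$ a.e.\ on $\{u_\vep=0\}$, whence the squeeze pinches to the pointwise a.e.\ equation $-\vep\partial_{tt}u_\vep-\Delta u_\vep+\partial_t u_\vep=-\chi_{\{u_\vep>0\}}$ and hence \eqref{eq:EulerEqMinFF}. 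Without the $W^{2,2}_{\loc}$ regularity step (or some equivalent device identifying the density of the distribution $\LL_\vep u_\vep-\partial_t u_\vep$ on $\{u_\vep=0\}$), your sketch does not rule out that this distribution equals some $g$ with $0\le g\le 1$ but $g\not\equiv 0$ on $\{u_\vep=0\}$, so the $\gamma=1$ case remains unproved as written.
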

\begin{proof} 
	Let $u := u_\vep$ be the minimizer of $\EE_\vep$ in $\UU_0$ and let $\varphi \in C_c^\infty(Q)$.
	We have
	\[
	\begin{aligned}
	\frac{\EE_\vep(u + \delta\varphi) - \EE_\vep(u)}{\delta} &= 2\int_0^\infty \frac{e^{-t/\varepsilon}}{\varepsilon} \int_{\mathbb{R}^n} \left( \varepsilon \partial_t u \partial_t\varphi + \nabla u \cdot\nabla\varphi + \frac{(u+\delta\varphi)_+^\gamma - u_+^\gamma}{\delta} \right)\rd x \rd t + O(\delta),
	\end{aligned}
	\]
	as $\delta \to 0$. We divide the remaining part of the proof depending whether $\gamma \in (1,2)$ or $\gamma = 1$.

	\medskip
	
	\noindent $\bullet$ Assume $\gamma \in (1,2)$. Then the function $u \mapsto u_+^\gamma$ is everywhere differentiable, and thus
	\[
	\frac{(u+\delta\varphi)_+^\gamma - u_+^\gamma}{\delta} \to \gamma u_+^{\gamma-1} \varphi \quad \text{ a.e. in } Q,
	\]
	as $\delta \to 0$. Consequently, taking the limit of the incremental quotient above and using the minimality of $u$, we obtain
	\[
	\int_0^\infty e^{-t/\varepsilon} \int_{\mathbb{R}^n} \left( \varepsilon \partial_t u \partial_t\varphi + \nabla u \cdot\nabla\varphi + \gamma u_+^{\gamma-1} \varphi \right)\rd x \rd t = 0.
	\]
	Choosing $\varphi = e^{t/\varepsilon} \eta$ and noticing that $\partial_t\varphi = e^{t/\varepsilon}\left(\tfrac{1}{\varepsilon} \eta + \partial_t\eta \right)$, \eqref{eq:EulerEqMinFF} easily follows.
	
	\medskip
	
	\noindent $\bullet$ Assume $\gamma = 1$.
	In this case, the function $u \mapsto u_+$ is not differentiable at $u=0$: we thus generalize the argument in \cite[Proposition 5.12]{FerRERosOt2022:book} to our degenerate-elliptic setting. 
	 By Lemma \ref{lem:ExistenceMin}, we know that $u \geq 0$ a.e. in $Q$. Consequently, one easily obtains that 
	\[
	\frac{(u+\delta\varphi)_+ - u_+}{\delta} \; \to \;  
	\chi_{\{ u > 0\}}\varphi + \chi_{\{ u = 0\}}\varphi_+ \quad \text{ a.e. in } Q, 
	\]
	as $\delta \to 0^+$.
	Since by minimality $\EE_\vep(u + \delta\varphi) - \EE_\vep(u) \geq 0$, we may choose $\varphi = e^{t/\varepsilon} \eta$ as before and pass to the limit as $\delta \to 0^+$ to find
	\begin{equation}\label{eq:EulerEqMinFFProof}
	\varepsilon \int_Q \partial_t u \partial_t \eta \dx \rd t + \int_Q (\partial_t u \eta + \nabla u \cdot\nabla\eta + \chi_{\{ u > 0\}}\eta + \chi_{\{ u = 0\}}\eta_+) \dx \rd t \geq 0,
	\end{equation}
	for every $\eta \in C_c^\infty(Q)$. 
	
	Now, if we define the differential operator $\LL_\vep := \vep \partial_{tt} + \Delta$, setting
	\[
	- \langle \LL_\vep u, \eta \rangle := \int_Q ( \vep \partial_t u \partial_t \eta + \nabla u \cdot\nabla\eta ) \dx \rd t,
	\]
	the previous inequality \eqref{eq:EulerEqMinFFProof} becomes
	\[
	- \langle \LL_\vep u, \eta \rangle + \int_Q (\partial_t u \eta + \chi_{\{ u > 0\}}\eta + \chi_{\{ u = 0\}}\eta_+) \dx \rd t \geq 0. 
	\]
	Consequently, 
	\begin{equation}\label{eq:LepsSigned}
	\langle \LL_\vep u, \eta \rangle  \leq
	\begin{cases}
	\int_Q  (\partial_t u + 1)\eta \dx \rd t                \quad &\text{ for all } \eta \in  C_c^\infty(Q) \text{ such that } \eta \geq 0, \\ 
	\int_Q (\partial_t u + \chi_{\{u > 0\}})\eta \dx \rd t  \quad &\text{ for all } \eta \in  C_c^\infty(Q) \text{ such that } \eta \leq 0,
	\end{cases}
	\end{equation}
	that is,
	\begin{equation}\label{eq:DoubleIneq}
	\chi_{\{u > 0\}} \leq \vep\partial_{tt}u + \Delta u - \partial_tu \leq 1
	\end{equation}
	in the weak sense.

	Now, given $r > 0$, we bound $\LL_\vep u$ in $L^2(Q_r^+)$ by duality: given $\eta \in C_c^\infty(Q_r^+)$, \eqref{eq:LepsSigned} yields
	\begin{equation}
		\begin{split}
			\langle \LL_\vep u, \eta \rangle &\leq \langle \LL_\vep u, \eta_+ \rangle + \langle \LL_\vep u, -\eta_- \rangle \leq \int_{Q_r^+} |\partial_t u + 1|\eta_+ \dx \rd t + \int_{Q_r^+} |\partial_t u + \chi_{\{u > 0\}}||\eta_-| \dx \rd t \\
			&\leq 2\|\partial_tu + 1\|_{L^2(Q_r^+)} \|\eta\|_{L^2(Q_r^+)} \leq C \|\eta\|_{L^2(Q_r^+)}, 
		\end{split}
	\end{equation}
	where we have used the estimate \eqref{eq:EnBound1} in the last inequality (notice that here the constant $C >0$ depends on $r$ but is independent of $u$). 
	Thus, by the arbitrariness of $r > 0$, it follows that
	\[
	\LL_\vep u = \vep \partial_{tt}u + \Delta u \in L_{\loc}^2(Q),
	\]
	and so, by $W^{2,2}$-estimates of Calder\'on-Zygmund type, we obtain $u \in W^{2,2}_{\loc}(Q)$ (see for instance \cite[Section 2]{FerRERosOt2022:book}). 
	Since $W^{2,2}$-regularity is enough to apply Rademarcher's theorem, we deduce that
	\[
	\partial_tu, \partial_{tt}u, \Delta u = 0 \quad \text{ a.e. in } \{u = 0\}. 
	\]
	Combining this with \eqref{eq:DoubleIneq}, we finally obtain
	\[
	-\vep\partial_{tt}u - \Delta u + \partial_tu = - \chi_{\{u > 0\}} \quad \text{ a.e. in } Q,
	\]
	which, in turn, implies \eqref{eq:EulerEqMinFF} for $\gamma = 1$.
\end{proof}
We now establish a uniform non-degeneracy property for solutions to \eqref{eq:EulerEqMinFF} when $\gamma = 1$. 
Notice that, even though non-degeneracy is quite standard in obstacle problems, it is not clear that such property can be made \emph{uniform} with respect to $\vep \in (0,1)$, which is exactly what we prove next.

\begin{lem}\label{Lem:NonDeg}
There exists a constant $c > 0$, depending only on $n$, such that for every $\vep \in (0,1)$, every $(x_0,t_0)\in Q$ such that $B_1(x_0,t_0) \subset Q$, every weak solution $u_\vep$ to
\begin{equation}\label{eq:EulerNonDeg}
-\varepsilon \partial_{tt} u_\vep + \partial_t u_\vep - \Delta u_\vep = - \chi_{\{u_\vep > 0\}} \quad \text{ in }  B_1(x_0,t_0) \subset Q,
\end{equation}
every $(z_\vep,\tau_\vep) \in \overline{\{u_\vep > 0\}} \cap B_{1/2}(x_0,t_0)$, and every $r \in (0,\tfrac{1}{2})$, we have
\begin{equation}\label{eq:NonDeg}
\sup_{B_r(z_\vep,\tau_\vep)} u_\vep \geq cr^2.
\end{equation}
\end{lem}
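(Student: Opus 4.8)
The plan is to prove the non-degeneracy estimate \eqref{eq:NonDeg} by the classical comparison-function argument of the obstacle problem, carefully tracking all constants to ensure they do not deteriorate as $\vep \to 0$. First I would reduce to a point $(z,\tau) \in \{u_\vep > 0\}$: since $u_\vep$ is continuous (by the $W^{2,2}_{\loc}$ regularity established in \Cref{Lem:EulerEqMinFF}) and $(z_\vep,\tau_\vep) \in \partial\{u_\vep > 0\}$, if the estimate holds at all interior positivity points close to $(z_\vep,\tau_\vep)$ it passes to the limit. So fix $(z,\tau)$ with $u_\vep(z,\tau) > 0$ and $B_r(z,\tau) \subset B_{1/2}(x_0,t_0)$ (one must be slightly careful with radii, but a standard covering/scaling adjustment handles the passage from $(z,\tau)$ back to $(z_\vep,\tau_\vep)$).

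The heart of the argument is to build a subsolution of $\LL_\vep := \vep\partial_{tt} + \Delta - \partial_t$ against which to compare $u_\vep$. In the pure elliptic obstacle problem one uses $w(x) = u(z) + c|x - z|^2$; here, because of the operator $\LL_\vep = \vep\partial_{tt} + \Delta - \partial_t$, the natural candidate is a parabolic-type barrier, e.g.
\begin{equation*}
w(x,t) := u_\vep(z,\tau) + \frac{c}{2n}\,|x - z|^2 + a(t - \tau)^2 + b(t-\tau),
\end{equation*}
or even just $w(x,t) = u_\vep(z,\tau) + \tfrac{c}{2n}|x-z|^2 + \tfrac{c}{2\vep}\ldots$ — the point is to choose the coefficients so that $\LL_\vep w \geq 1 \geq \chi_{\{u_\vep > 0\}} \geq \LL_\vep u_\vep$ on the (open) set $\{u_\vep > 0\} \cap B_r(z,\tau)$, where the last inequality is exactly \eqref{eq:DoubleIneq}. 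Applying the maximum principle for $\LL_\vep$ on the connected component $\Omega$ of $\{u_\vep>0\}\cap B_r(z,\tau)$ containing $(z,\tau)$, and noting $u_\vep - w = 0$ is violated at $(z,\tau)$ (where $u_\vep - w < 0$ would be the contradiction... rather: $u_\vep(z,\tau) - w(z,\tau) = 0$), one concludes $u_\vep - w$ must be negative somewhere on $\partial\Omega$. Since on $\partial\Omega \cap \partial\{u_\vep > 0\}$ one has $u_\vep = 0$ while $w > 0$, the negative part of the boundary must sit on $\partial B_r(z,\tau)$, and there $w \geq u_\vep(z,\tau) + cr^2 \cdot(\text{const})$; hence $\sup_{\partial B_r(z,\tau)} u_\vep \geq u_\vep(z,\tau) + c'r^2 \geq c'r^2$, which gives \eqref{eq:NonDeg} after adjusting the constant and using $\sup_{B_r} \geq \sup_{\partial B_r}$ appropriately (or iterating on dyadic annuli).

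The main obstacle I anticipate is the $\vep$-uniformity of the barrier construction and of the maximum principle. The operator $\LL_\vep$ degenerates as $\vep \to 0$, so a barrier whose second-order-in-$t$ coefficient must be paid for by a factor like $1/\vep$ would be useless. The resolution should be to exploit that $\vep \partial_{tt}$ contributes with the \emph{good sign} when the barrier is convex in $t$ (so it only helps make $\LL_\vep w \geq 1$), meaning one can take the $t$-part of $w$ with coefficients independent of $\vep$ — e.g. pure spatial part $\tfrac{c}{2n}|x-z|^2$ already gives $\Delta w = c$, and a carefully signed linear-plus-convex term in $t$ absorbs the $-\partial_t$ term uniformly in $\vep \in (0,1)$, on the small ball of radius $r < 1/2$. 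One must also check the comparison principle for $\LL_\vep$ holds on bounded domains uniformly (it does: $\LL_\vep$ is uniformly elliptic for each fixed $\vep > 0$, and the comparison principle statement itself needs no quantitative constant — only the barrier must be uniform). A secondary technical point is ensuring continuity of $u_\vep$ up to the scale where the barrier argument is run, which is guaranteed by the local $W^{2,2}$ bound, itself $\vep$-dependent but only used qualitatively here. Assembling these, the constant $c$ in \eqref{eq:NonDeg} ends up depending only on $n$, as claimed.
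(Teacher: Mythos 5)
Your overall strategy (quadratic barrier plus maximum principle for $\LL_\vep:=\vep\partial_{tt}+\Delta-\partial_t$, with a limiting argument from interior positivity points back to the free boundary point) is the same as the paper's, but the central comparison is set up with the inequality in the wrong direction, and the maximum-principle step as you describe it does not yield \eqref{eq:NonDeg}. You require $\LL_\vep w\geq 1\geq\chi_{\{u_\vep>0\}}\geq\LL_\vep u_\vep$ on the positivity component $\Omega$ (incidentally, \eqref{eq:DoubleIneq} states $\chi_{\{u_\vep>0\}}\leq\LL_\vep u_\vep\leq 1$, so on $\{u_\vep>0\}$ one simply has $\LL_\vep u_\vep=1$). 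With your choice, $w-u_\vep$ is a subsolution of $\LL_\vep$, hence its maximum over $\overline\Omega$ is attained on $\partial\Omega$ and is $\geq (w-u_\vep)(z,\tau)=0$; but this is vacuous, because on the free-boundary portion of $\partial\Omega$ one has $u_\vep=0$ and $w>0$, so $u_\vep-w<0$ there automatically. Your assertion that ``the negative part of the boundary must sit on $\partial B_r$'' is therefore false, and no lower bound on $u_\vep$ along $\partial B_r$ follows: everything you deduce is compatible with $u_\vep$ being arbitrarily small on $\partial B_r\cap\{u_\vep>0\}$. The garbled parenthetical about where the contradiction occurs is a symptom of this sign problem.

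The correct setup (the paper's) reverses the roles: at interior points $(z_k,\tau_k)\in\{u_\vep>0\}$ approximating $(z_\vep,\tau_\vep)$, take $P(x,t):=u_\vep(z_k,\tau_k)+c\big(|x-z_k|^2+(t-\tau_k)^2\big)$ and choose $c$ so that $\LL_\vep P\leq 1$; since $\LL_\vep P=2c(n+\vep)-2c(t-\tau_k)\leq 2c(n+2)$ for $\vep<1$ and $|t-\tau_k|<1$, the choice $c=\tfrac{1}{2(n+2)}$ works. Then $u_\vep-P$ is the subsolution, it vanishes at $(z_k,\tau_k)$, and it is strictly negative on $\partial\{u_\vep>0\}\cap B_r(z_k,\tau_k)$, so its maximum is attained on $\{u_\vep>0\}\cap\partial B_r(z_k,\tau_k)$, giving $\sup_{B_r(z_k,\tau_k)}u_\vep\geq u_\vep(z_k,\tau_k)+cr^2$; letting $k\to\infty$ and using continuity of $u_\vep$ gives \eqref{eq:NonDeg}. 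Note that in this (correct) direction the term $\vep\partial_{tt}P=2c\vep$ enters with the \emph{unfavorable} sign --- convexity of the barrier in $t$ does not ``help'' --- and uniformity in $\vep$ comes only from $\vep<1$ being absorbed into the constant, just as the drift contributes $2c|t-\tau_k|\leq 2c$ because $r<\tfrac12$. So the mechanism you propose for the $\vep$-uniformity is attached to the wrong sign, even though a dimensional constant is indeed recoverable once the comparison is set up correctly.
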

\begin{proof} 
	Since the equation \eqref{eq:EulerNonDeg} is invariant under translations and here the initial condition at $\{t=0\}$  plays no role, we may assume $(x_0,t_0) = (0,0)$. 
	We also drop the $\vep$-subindexes to make the proof easier to read, that is, we set $u := u_\vep$, and $(z,\tau) := (z_\vep,\tau_\vep) \in \overline{\{u > 0\}} \cap B_{1/2}$.
	
	Now, take $\{(z_k,\tau_k)\}_{k\in \NN} \subset \{u > 0\}$ such that $(z_k,\tau_k) \to (z,\tau)$ as $k \to +\infty$. For each $k \in \NN$, we define
	\[
	w(x,t) := u(x,t) - u(z_k,\tau_k) - c \big( |x - z_k|^2 + (t - \tau_k)^2 \big), \qquad \text{ with } \quad c := \tfrac{1}{2(n+2)}.
	\]
	Then, if $r \in (0,1/2)$ is arbitrarily fixed, it is easy to compute
	\begin{equation}
		\begin{split}
			-\varepsilon \partial_{tt} w + \partial_t w - \Delta w & = - 1 + 2c(n +\vep) - 2c(t-\tau_k) \\
			& \leq - 1 + 2c(n + 1) + 2c|t-\tau_k| \\
			&\leq - 1 + 2c(n + 1) + 2c \leq 0 \qquad \text{ in } \{u > 0\} \cap B_r(z_k,\tau_k),
		\end{split}
	\end{equation}
	in view of the definition of $c$ above. 
	Note that $u$ (and hence $w$) is smooth in $\{u > 0\} \cap B_r(z_k,\tau_k)$ by standard elliptic estimates.
	By definition, we also have
	\[
	w(z_k,\tau_k) = 0 \qquad \text{ and } \qquad w < 0 \quad \text{ in } \partial\{u > 0\} \cap B_r(z_k,\tau_k),
	\]
	and thus, by the maximum principle,\footnote{Note that by standard elliptic estimates $u,w \in C^{1,\alpha}_{x,t}(B_{1/2})$ for all $\alpha\in (0,1)$ and $\{u > 0\} \cap B_r(z_k,\tau_k)$ is an  open set.}
	\begin{equation}
		\begin{split}
			0 = w(z_k,\tau_k) &\leq \sup_{\partial (\{u > 0\} \cap B_r(z_k,\tau_k))} w = \sup_{ \{u > 0\} \cap \partial B_r(z_k,\tau_k)} w \\
			&= \sup_{ \{u > 0\} \cap \partial B_r(z_k,\tau_k)} u - u(z_k,\tau_k) - cr^2 \leq \sup_{B_r(z_k,\tau_k)} u - u(z_k,\tau_k) - cr^2, 
		\end{split}
	\end{equation}
	that is
	\[
	\sup_{B_r(z_k,\tau_k)} u \geq u(z_k,\tau_k) + cr^2.
	\]
	Passing to the limit as $k \to +\infty$, \eqref{eq:NonDeg} follows thanks to the continuity of $u$.
\end{proof}
\subsection{Non-degeneracy and optimal regularity: parabolic setting}\label{Subsec:EstimatesParabolic}

We present below two technical results that we will exploit in the proof of \Cref{thm:MainIntro}, in the case $\gamma = 1$. 
The first one, \Cref{lem:ParNonDeg}, is a non-degeneracy property ---it can be seen as the parabolic version of Lemma \ref{Lem:NonDeg}---, while the second one, \Cref{cor:OptimalReg}, is an optimal regularity estimate, obtained by combining interior estimates and an optimal growth property established below in \Cref{lem:Optimalgrowth}. 
The optimal regularity we prove coincides with the optimal regularity of solutions to the parabolic obstacle problem (see for instance~\cite{CPS}). 
The proof closely follows the classical one, but we work in a slightly more general framework, which is exactly what we need to carry out the limiting procedure in the proof of \Cref{thm:MainIntro}.

Here and in the rest of the paper, we use the usual notation for parabolic cylinders.
That is, we set $Q_r(x_0,t_0) := \{(x,t) \in \RR^n \times \RR : |x-x_0|< r, |t - t_0| < r^2\}$ and  $Q_r^-(x_0,t_0) := Q_r(x_0,t_0)\cap \{t < t_0\}$.
As customary, the center point $(x_0,t_0)$ is omitted when $(x_0,t_0) = (0,0)$.

\begin{lem}\label{lem:ParNonDeg}
	Let $u\in C(Q_1)$ be a nonnegative weak solution to
	\begin{equation}
		\partial_t u - \Delta u = -1 \quad \text{ in } \{u>0\} \cap Q_1.
	\end{equation}
	Then, there exists  a constant $c_\circ >0$ depending only on $n$ such that, for every $(x_0,t_0) \in \overline{\{u>0\}} \cap  Q_{1/2}$ and every $r \in (0, 1/2)$, it holds
	\begin{equation}
		\sup_{Q_r^-(x_0,t_0)} u \geq c_\circ r^2.
	\end{equation}
\end{lem}
\begin{proof}
	The argument follows the proof of Lemma \ref{Lem:NonDeg}. Given $(x_0,t_0) \in \overline{\{u>0\}} \cap Q_{1/2}$, let $\{(x_k,t_k)\}_{k\in \NN} \subset \{u > 0\}\cap Q_{1/2}$ be such that $(x_k,t_k) \to (x_0,t_0)$ as $k \to +\infty$. 
	For each $k \in \NN$, we define
	\[
	w_k(x,t) := u(x,t) - u(x_k,x_k) - c_\circ \big( |x - x_k|^2 + t_k - t \big), \qquad \text{ with } \quad c_\circ := \tfrac{1}{2n+1}.
	\]
	Then, in $\{u>0\} \cup Q_1$ it holds
	\begin{equation}
	\partial_t w_k - \Delta w_k = -1 + c_\circ(2n + 1) \leq 0.
	\end{equation}
	Hence, using the maximum principle and that $w_k < 0$ in $\partial \{u>0\}\cup Q_r^-$ we have
	\begin{equation}
	0 = w_k(x_k, t_k) \leq \sup_{Q_r^-(x_k,t_k) \cup \{u>0\}} w_k = \sup_{\partial_p (Q_r^-(x_k,t_k) \cap \{u>0\})} w_k \leq \sup_{\partial_p (Q_r^-(x_k,t_k) )} w_k,
	\end{equation}
	where $\partial_p \Omega$ denotes the parabolic boundary of a set $\Omega \subset \RR^{n+1}$.
	Since $w_k \leq u - u(x_k,t_k) - c_\circ r^2$ in $\partial_p (Q_r^-(x_k,t_k) )$, it holds that
	\begin{equation}
		\sup_{\partial_p (Q_r^-(x_k,t_k) )} w_k \leq \sup_{\partial_p (Q_r^-(x_k,t_k) )} u  - u(x_k,t_k) - c_\circ r^2\leq \sup_{Q_r^-(x_k,t_k)} u  - u(x_k,t_k) - c_\circ r^2,
	\end{equation}
	from which we obtain
	\begin{equation}
		\sup_{Q_r^-(x_k,t_k)} u \geq  u(x_k,t_k) + c_\circ r^2.
	\end{equation}
	We conclude by taking the limit as $k\to +\infty$ and using that $u(x_0,t_0)\geq 0$ (note that  $u(x_0,t_0)= 0$ when $(x_0,t_0) \in \partial \{u>0\}$).
\end{proof}
Next we establish an optimal growth bound and, as corollary, an optimal regularity estimate. The proof is quite standard, but we present it for completeness, following \cite[Lemma 5.3]{AudKuk2022:art} where it is done for the parabolic obstacle problem.
\begin{lem}\label{lem:Optimalgrowth}
Let $K > 0$ and let $f \in L^\infty(Q_1)$ with $f \leq 0$ a.e. in $Q_1$ and $\|f\|_{L^\infty(Q_1)} \leq K$. Let $u\in C(Q_1)$ be a nonnegative weak solution to
\begin{equation}\label{eq:HEF}
\partial_t u - \Delta u = f \quad \text{ in } Q_1
\end{equation}
with $\|u\|_{L^\infty(Q_1)} \leq K$. Then, there exists $C_0 > 0$ depending only on $n$ and $K$ such that 
\begin{equation}\label{eq:OptimalGrowth}
||u||_{L^\infty(Q_r(x_0,t_0))}\leq C_0 r^2,
\end{equation}
for every $(x_0,t_0) \in \{u=0\}\cap Q_{1/2}$ and every $r \in (0,\tfrac{1}{4})$.   
\end{lem}
\begin{proof}
	Given $(x_0,t_0) \in \{u=0\}\cap Q_{1/2}$,
	since $\|f\|_{L^\infty(Q_1)} \leq K$, we first notice that \cite[Lemma 5.1]{Weiss1999:art} (or, equivalently, \cite[Lemma 5.2]{AudKuk2022:art}) yields that for every $\delta\in (0,1)$ and $r \in (0,\tfrac{1}{4})$,
\begin{equation}\label{eq:HarnackIneqParbola}
\sup_{P_r^\delta(x_0,t_0)} u \leq C_0( u(x_0,t_0) + Kr^2 ) = C_0 Kr^2,
\end{equation}
where $C_0 > 0$ depends only on $n$ and $\delta$, and
\begin{equation}
P_r^\delta(x_0,t_0) := \{(x,t) \in Q_r^-(x_0,t_0): t - t_0 < -\delta|x-x_0|^2\}.
\end{equation}
In light of \eqref{eq:HarnackIneqParbola}, it is sufficient to bound $u$ in the set $Q_r(x_0,t_0) \setminus P_r^\delta(x_0,t_0)$ for some $\delta\in (0,1)$ that will be chosen later. 
This will be obtained by comparison with the function 
\[
w(x,t) := a (t-t_0 + b|x-x_0|^2),
\]
where
\[
b := \tfrac{1}{2n}, \qquad a := 2 \cdot \max\{C_0,16\} \cdot \tfrac{K}{b}.
\]

Let us show that $u \leq w$ in $Q_{1/4}(x_0,t_0) \setminus P_{1/4}^\delta(x_0,t_0)$.
First, it is immediate to check that the choice of $b$ makes $w$ caloric in $\RR^{n+1}$. 
Now, on the one hand, setting $\delta := \tfrac{b}{2} \in (0,1)$, we have
\[
w(x,t) \geq  a (-\tfrac{\delta}{16}  + \tfrac{b}{16} ) = \tfrac{ab}{32}    \quad \text{ in } \partial_p Q_{1/4}(x_0,t_0) \setminus  \overline{ P_{1/4}^\delta(x_0,t_0)},
\]
which allows us to deduce that $u \leq w$ in $\partial_p Q_{1/4}(x_0,t_0) \setminus \overline{ P_{1/4}^\delta(x_0,t_0)}$, thanks to the assumption $\|u\|_{L^\infty(Q_1)} \leq K$ and the definition of $a$. On the other hand, for every $\rho \in (0,\tfrac{1}{4})$ we have
\begin{equation}\label{eq:OptGrowBdBelowSuper}
w(x,t)|_{t = t_0 -\delta|x-x_0|^2} = \tfrac{ab}{2} \rho^2 \quad \text{ in } \partial B_\rho(x_0),
\end{equation}
 by definition of $w$ and
\[
\sup_{x \in \partial B_\rho(x_0),t=t_0-\delta \rho^2} u(x,t) \leq C_0K \rho^2,
\]
for every $\rho \in (0,\tfrac{1}{4})$, as an immediate consequence of \eqref{eq:HarnackIneqParbola}. 
Combining the last two inequalities with the definition of $a$, we deduce that $u \leq w$ in $\partial P_{1/4}^\delta(x_0,t_0)  \cap \{t-t_0 > -\delta/16\}$ and therefore, since $u$ is sub-caloric in $Q_1$ (since $f \leq 0$ a.e. in $Q_1$), we obtain that $u \leq w$ in $Q_{1/4}(x_0,t_0) \setminus P_{1/4}^\delta(x_0,t_0)$ as wanted. 
The result then follows since $v \leq Cr^2$ in $Q_r(x_0,t_0) \setminus P_r^\delta(x_0,t_0)$ for every $r \in (0,\tfrac{1}{4})$ and some $C > 0$ depending on $a$ and $b$.    
\end{proof}
\begin{cor}\label{cor:OptimalReg}
Let $K > 0$, $f \in L^\infty(Q_1)$, and $u\in C(Q_1)$ as in Lemma \ref{lem:Optimalgrowth}. Further, assume that $f$ is constant in $\{u>0\}\cap Q_1$. 
Then there exists a constant $C > 0$ depending only on $n$ and $K$ such that 
\begin{equation}\label{eq:OptimalReg1}
\|\partial_t u\|_{L^\infty(Q_{1/2})} + \| D^2 u \|_{L^\infty(Q_{1/2})} \leq C.
\end{equation}
In addition, for every $(y,\tau) \in \{u > 0\}\cap Q_{1/2}$ we have
\begin{equation}\label{eq:OptimalReg2}
|\nabla u (y,\tau)| \leq C \delta,
\end{equation}
where $\delta := \sup \{ \rho > 0 : Q_\rho(y,\tau) \subset \{ u > 0 \} \}$.
\end{cor}
\begin{proof} 
	We may assume that $\partial \{u>0\} \cap Q_{1/2} \neq \varnothing$, otherwise the result is classical and well-known.
	Let us fix $(y,\tau) \in \{u > 0\}\cap Q_{1/2}$ and let $\delta = \delta(y,\tau) > 0$ be as in the statement. 
	We first apply the interior estimates in \cite[Theorem 4.9]{Lieberman1996:book} (with $\alpha = 1$, $(a_{ij})_{i,j=1}^n = I$, $b_i=0$ for $i=1,\ldots,n$, and $c=0$) to deduce 
\[
\begin{aligned}
\|\partial_t u\|_{L^\infty(Q_{\delta/2}(y,\tau))} + \| D^2 u \|_{L^\infty(Q_{\delta/2}(y,\tau))} &+ \frac{1}{\delta} \| \nabla u \|_{L^\infty(Q_{\delta/2}(y,\tau))} \\
&\leq \bar{C} \left( \frac{1}{\delta^2} \| u \|_{L^\infty(Q_\delta(y,\tau))} + \|f\|_{L^\infty(Q_\delta(y,\tau))} 
+ \delta [f]_{\text{Lip}_p(Q_\delta(y,\tau))} \right), 
\end{aligned}
\]
for some constant $\bar{C} > 0$ depending only on $n$, where
\[
[f]_{\text{Lip}_p(Q)} :=  \sup_{\substack{(x,t),(y,\tau) \in Q \\ (x,t)\not=(y,\tau)}} \frac{|f(x,t) - f(y,\tau)|}{\sqrt{|x-y|^2 + |t-\tau|}}. 
\]
Then, using the growth estimate \eqref{eq:OptimalGrowth} ---applied in $Q_{2\delta}(y_0,\tau_0)$---, the $L^\infty$ bound for $f$, and the fact that $f$ is constant on $Q_\delta$ ---since $Q_\delta\subset \{u>0\}$---, it follows that
%
\[
\frac{1}{\delta^2} \| u \|_{L^\infty(Q_\delta(y,\tau))} + \|f\|_{L^\infty(Q_\delta(y,\tau))} 
+ \delta [f]_{\text{Lip}_p(Q_\delta(y,\tau))}  \leq 4C_0 + K,
\]
where $C_0 > 0$ is as in \Cref{lem:Optimalgrowth}. Combining the above two inequalities and using the arbitrariness of $(y,\tau)$, we obtain
\begin{equation}\label{eq:OptRegEstInterior}
|\partial_t u (y,\tau)| + | D^2 u(y,\tau)| \leq C \qquad \text{ for all } \, (y,\tau) \in \{u > 0\}\cap Q_{1/2} 
\end{equation}
and 
\begin{equation}\label{eq:GradEstDelta}
|\nabla u(y,\tau)| \leq C \delta \qquad \text{ for all } \, (y,\tau) \in \{u > 0\}\cap Q_{1/2},
\end{equation}
where $C := \bar{C}(4C_0 + K)$. In particular, \eqref{eq:GradEstDelta} implies that $\nabla u$ can be continuously extended to zero in $\{u = 0\} \cap Q_{1/2}$. 

To complete the proof it is enough to show that $\nabla u$ is Lipschitz in space, that is
\begin{equation}\label{eq:GradLipSpace}
|\nabla u(x,t) - \nabla u(z,t)| \leq L|x-z| \qquad \text{ for all } \, (x,t),(z,t) \in Q_{1/2},
\end{equation}
for some constant $L > 0$ depending only on $n$ and $K$. 
Indeed, \eqref{eq:GradLipSpace} implies $\|D^2u\|_{L^\infty(Q_{1/2})} \leq L$ which, combined with the fact that $u$ is a weak solution to \eqref{eq:HEF} and $\|f\|_{L^\infty(Q_1)} \leq K$, gives the bound $\|\partial_t u\|_{L^\infty(Q_{1/2})} \leq L + K$.

In the next argument, we will use the \textit{parabolic distance}. 
For points $(x,t),(y,\tau) \in \RR^{n+1}$, it is defined as
\[
\dist_p((x,t),(y,\tau)) := \inf\{\rho > 0: (y,\tau) \in Q_\rho(x,t) \}.
\]
For a point $(x,t) \in \RR^{n+1}$ and a set $A \subset \RR^{n+1}$, we set $\dist_p((x,t),A) := \inf_{(y,\tau) \in A} \dist_p((x,t),(y,\tau))$.   

Let us establish \eqref{eq:GradLipSpace}.
To do it, let $(x,t),(z,t) \in Q_{1/2}$. 
If both $(x,t),(z,t) \in \{u = 0\}$ the claim is trivial since $\nabla u = 0$ in $\{u=0\}\cap Q_{1/2}$. 
Hence, we may assume that $(x,t) \in \{u > 0\}$.
By symmetry, we may also assume
\[
d := \dist_p((x,t),\{u = 0\}) \geq \dist_p((z,t),\{u = 0\}).
\]
On the one hand, assume that $\dist_p((x,t),(z,t)) \leq d/2$. Then, $(z,\tau) \in \overline{Q_{d/2}(x,t)} \subset \{u > 0\}$ and thus, by \eqref{eq:OptRegEstInterior}, we have
\[
|\nabla u(x,t) - \nabla u(z,t)| \leq \int_0^1 |D^2u(sx + (1-s)z,t)| \rd s \cdot |x-z| \leq C|x-z|. 
\]
On the other hand, if $\dist_p((x,t),(z,t)) \geq d/2$, then \eqref{eq:GradEstDelta} yields
\[
|\nabla u(x,t) - \nabla u(z,t)| \leq |\nabla u(x,t)| + |\nabla u(z,t)| \leq 2Cd \leq 4C \dist_p((x,t),(z,t)) = 4C |x-z|.
\]
Therefore \eqref{eq:GradLipSpace} follows with $L := 4C$. 
\end{proof}
\subsection{Proof of \Cref{thm:MainIntro}}\label{Subsec:ProofMainResult} In this subsection, we combine all the ingredients introduced above to prove our main result.
\begin{proof}[Proof of  \Cref{thm:MainIntro}]
	Let $\{u_\vep\}_{\vep \in (0,1)}$ be the family of minimizers of $\EE_\vep$ in $\UU_0$.
	Then by \Cref{prop:weaklimit}, there are $\vep_j \to 0$ and $u \in \UU_0$ such that $u_{\vep_j}$ converge to $u$ in the sense of \eqref{eq:WeakLimitProp} and, by \Cref{Lem:EulerEqMinFF}, each $u_{\vep_j}$ satisfies \eqref{eq:EulerEqMinFF} for every $j \in \NN$.
	For simplicity, we set $u_j := u_{\vep_j}$ for $j \in \NN$.
	
	\textbf{Step 1: local uniform convergence of the sequence of minimizers.} \\
	First, up to passing to another subsequence, we may assume that for every $r > 0$ and every $(x_0,t_0) \in Q$ such that $Q_r(x_0,t_0) \subset \subset Q$
	\[
	\|u_{j}\|_{C^{\alpha,\alpha/2}(Q_{r/2}(x_0,t_0))} \leq C r^{-\alpha} \big[ 1 + r^{-\frac{n+2}{2}} \|u_{j}\|_{L^2(Q_r(x_0,t_0))} \big]
	\]
	for some $\alpha \in (0,1)$ and $C > 0$ depending only on $n$ (the parabolic H\"older norm $\|\cdot\|_{C^{\alpha,\alpha/2}}$ is quite standard, see for instance \cite[Appendix C]{Audrito2021:art} for the definition). 
	This easily follows by combining the (re-scaled) estimates in \cite[Proposition 3.1 and Proposition 4.1]{Audrito2021:art} with $U_\varepsilon = u_{\varepsilon_j} = u_j$, $f_\varepsilon = 0$, $F_\varepsilon =- \chi_{\{u_{\varepsilon_j}	 >0\}}$, $p=q = \infty$, and $a=0$.\footnote{Notice that \cite[Proposition 4.1]{Audrito2021:art} can be applied here since $u_{\vep_j} \to u$ in $C_{\loc}(0,\infty:L^2(\RR^n))$ (this is an immediate consequence of \eqref{eq:EnBound1}-\eqref{eq:EnBound2} and \cite[Corollary 8]{Simon1987:art}) and thus the assumption (4.1) in \cite{Audrito2021:art} is satisfied.}
	Further, the sequence $\{u_j\}_{j\in \NN}$ is uniformly bounded in $L^2(Q_r^+)$ for every $r>0$ ---see the argument leading to \eqref{eq:UnifL2Bound}--- and thus
	\[
	\|u_j\|_{C^{\alpha,\alpha/2}(Q_{r/2}(x_0,t_0))} \leq C,
	\]
	for some other constant $C > 0$ independent of $j$. Therefore, combining the Arzelà-Ascoli theorem with a standard covering argument, and up to passing to another subsequence, we deduce that 
	\[
	u_{j} \to u \quad \text{ locally uniformly in } Q,
	\]
	which, in particular, implies that $u$ is continuous in $Q$.

	\textbf{Step 2: conclusion for $\gamma \in (1,2)$. } \\
	We show that $u$ is a strong solution to \eqref{eq:ParReacDiff}.
	Notice that, if $\gamma \in (1,2)$, the nonlinearity $f_\gamma$ is continuous. 
	Hence, we may pass to the limit in \eqref{eq:EulerEqMinFF} by dominated convergence, combining \eqref{eq:EnBound1} and \Cref{prop:weaklimit} with the continuity of $f_\gamma$, and deduce that $u$ satisfies \eqref{eq:EulerParabolicIntro}, i.e., \eqref{eq:ParReacDiff} in the sense of \Cref{def:StrongSolReacDiffProb}.

	\textbf{Step 3: limit problem for $\gamma = 1$.}\\
	In the rest of the proof, we assume $\gamma = 1$.
Since $\chi_{\{u_j > 0\}}$ are bounded in $L^\infty(Q)$ uniformly in $j\in \NN$, there exists a nonnegative function $\chi \in L^\infty(Q)$ such that $\chi_{\{u_j > 0\}} \rightharpoonup^\star \chi$ in $L^\infty(Q)$ up to passing to a subsequence. 
Consequently, passing to the limit in \eqref{eq:EulerEqMinFF} as above and recalling that $L^\infty(Q) = L^1(Q)^\star$, we have
\[
\int_Q (\partial_t u \eta + \nabla u \cdot\nabla\eta + \chi \eta ) \dx\rd t = 0,
\]
for every $\eta \in C_c^\infty(Q)$. Hence, to conclude, it is enough to show that
	\begin{equation}\label{eq:ChiAE}
		\chi = \chi_{\{u > 0 \}} \quad \text{  a.e. in } Q_R, 
	\end{equation}
where $Q_R$ is any cylinder such that $Q_{2R} \subset Q$ and $R > 0$.

To check \eqref{eq:ChiAE}, on the one hand we notice that if $(x,t)\in \{ u>0\} \cap Q_R$, then $\chi_{\{u_j>0\}}(x,t) = 1$ for $j$ large enough, by uniform convergence.
Therefore, $\chi = 1$ in $\{u>0\}\cap Q_R$. 
On the other hand, we claim that for any $\delta \in (0,R^2/4)$, if $(x,t) \in \{u = 0\} \cap Q_R$ with $\dist((x,t), \{u>0\})>\delta$, then 
\[
\chi_{\{u_j>0\}}(x,t) = 0
\]
for $j$ large enough. 
Indeed, if we assume that this is not true, there exist $\{(x_k,t_k) \}_{k\in \NN} \subset Q_R$ with $\dist((x_k,t_k), \{u>0\})>\delta$ and $(x_k,t_k) \in \{u_k>0\}$ for $k\in \NN$. Then, by our uniform non-degeneracy estimate \eqref{eq:NonDeg}, we have
\[
u_k(z_k, \tau_k)= \sup_{B_{\delta/2}(x_k,t_k)} u_k \geq c\delta^2,
\]
for some $(z_k, \tau_k) \in \overline{B_{\delta/2}(x_k,t_k)}$ and some $c > 0$ independent of $k$.
Up to passing to a subsequence we have that $(x_k, t_k) \to (x_0, t_0)$ and $(z_k, \tau_k)\to (z,\tau)\in \overline{B_{\delta/2}(x_0,t_0)} \subset \subset \{u>0\}^c$.
Thus, $u(x_0, t_0) = 0$, but this contradicts the fact that $u_k(z_k, \tau_k) \to u(x_0, t_0) \geq c\delta^2$ (which follows from uniform convergence). 
Consequently, the claim is proved and we conclude that  $\chi = 0$ in $\INT(\{u = 0\}) \cap Q_R$, where $\INT(A)$ denotes the interior of a set $A$.

Summing up, we have that $u\in C(Q_R)$ is a weak solution to $\partial_t u - \Delta u = - \chi$ in $Q_R$, where  $\chi \in L^\infty(Q)$ is nonnegative, $\chi = 1$ in $\{u > 0\}\cap Q_R$, and $\chi = 0$ in $\INT(\{u = 0\})\cap Q_R$ (in particular, $u$ fulfills the assumptions of both \Cref{lem:ParNonDeg} and \Cref{cor:OptimalReg} with $f = \chi$). 

To prove \eqref{eq:ChiAE}, it remains to show that $\partial\{u>0\}\cap Q_R$ has zero measure, which is what we do next, in the spirit of \cite[Theorem 5.1]{Weiss1999:art}. 

\textbf{Step 4: measure of the free boundary.}\\
We first prove that for every $(x,t) \in \partial\{u>0\}\cap Q_R$ and every $r \in (0,R/4)$, we have 
\begin{equation}\label{eq:Density}
	\dfrac{\leb^{n+1}(\{u>0\}\cap Q_r(x,t))}{\leb^{n+1}(Q_r(x,t))} \geq c_\star > 0,
\end{equation}
where $\leb^{n+1}$ denotes the $(n+1)$-dimensional Lebesgue measure and $c_\star$ is a constant depending only on $n$  and $K_R := \max \{1, \| u\|_{L^\infty(Q_R)}\}$.
To show \eqref{eq:Density}, we notice that \Cref{lem:ParNonDeg} yields the existence of $(y,\tau)\in \{u>0\}\cap \overline{Q_{r/2}(x,t)}$ such that
\begin{equation}\label{eq:NonDegFinal}
u(y,\tau) \geq \tfrac{c_\circ}{4} r^2,
\end{equation}
where $c_\circ > 0$ depends only on $n$. 
Now, we claim that $Q_{\tilde{c}r}(y,\tau) \subset \{u>0\} \cap  Q_r(x,t)$ if $\tilde{c} \in (0,\tfrac{1}{2})$ is small enough depending only on $n$  and $K_R$.
Indeed, given $(z,\theta) \in Q_{\tilde{c}r}(y,\tau)$,  setting $\gamma(s) := s(y,\tau) + (1-s)(z,\theta)$ for $s \in [0,1]$. 
By \Cref{cor:OptimalReg} (applied with $f= -\chi$ and $K = K_R$), we have
\[
\begin{aligned}
u(y,\tau) - u(z,\theta) &= \int_0^1 (\nabla u (\gamma(s)), \partial_tu (\gamma(s)) \cdot (y-z,\tau-\theta) \, \rd s \\
&\leq \sup_{s \in [0,1]} |\nabla u (\gamma(s))|\cdot|y-z| + \sup_{s \in [0,1]} |\partial_tu (\gamma(s))| \cdot |\tau-\theta| \leq Cr\cdot \tilde{c}r + C \cdot(\tilde{c}r)^2 \\
&\leq 2C\tilde{c}r^2,
\end{aligned}
\]
where $C > 0$ is the constant appearing in \eqref{eq:OptimalReg1} and \eqref{eq:OptimalReg2} and depends only on $n$ and $K_R$. 
Setting $\tilde{c} := \min\{ \tfrac{c_\circ}{16C},\tfrac{1}{2}\}$ and combining the bound above with \eqref{eq:NonDegFinal}, we obtain
\[
u(z,\theta) \geq (\tfrac{c_\circ}{4} - 2C\tilde{c}) r^2 > 0,
\]
and our claim is proved.
The fact that $Q_{\tilde{c}r}(y,\tau) \subset \{u>0\} \cap  Q_r(x,t)$ readily implies \eqref{eq:Density}.

Once \eqref{eq:Density} is established, let us show that the free boundary has zero measure.
By contradiction, we assume that $\leb^{n+1} (\partial\{u>0\}\cap Q_R) > 0$.
Now, since the set $\partial \{ u > 0\}$ is measurable, $\chi_{\partial \{ u > 0\}}$ is integrable in $Q_R$ and thus, for almost every point $(x,t)\in \partial \{ u > 0\} \cap Q_R$, 
\begin{equation}\label{eq:LebesgueDensity}
	\dfrac{\leb^{n+1}(\partial \{u>0\}\cap E_r(x,t))}{\leb^{n+1}(E_r(x,t))} \to 1 \quad \text{ as } r\downarrow 0,
\end{equation}
where $E_r (x,t) := B_r(x) \times  (t-r, t+r)$. Let us take one of such points $(x,t)$ (recall that this is allowed by our assumption of $\leb^{n+1} (\partial\{u>0\}\cap Q_R)$ being positive). 
Up to a translation, we may assume that $(x,t) = (0,0)$.
Let us also take $r_\circ>0$ such that, for $r<r_\circ$, $\leb^{n+1}(\partial \{u>0\}\cap E_{r/2}) \geq \leb^{n+1}(E_{r/2}) /2$.

Now, we take a sequence $r_k \to 0$ such that $1/(2r_k)\in \NN$.
For each of these $r_k$ (which from now on we will denote simply by $r$), we decompose the cylinder $E_{r/2}$ (up to a set of zero measure) into $1/r$ disjoint parabolic cylinders $Q_{r/2}(0, t_i)$ for some $\{t_i\}_i$.
If $r< r_\circ$, then  it is not difficult to see that the number of cylinders $Q_{r/2}(0, t_i)$ which intersect $\partial \{u>0\}$, which we denote by $N$, satisfies $N\geq 1/r$.
From such $N$ cylinders, we can take another collection $\{Q_{r/2}(0, t_j)\}_j$ with cardinality at least $N/4$ and such that the distance between each pair of cylinders in the collection is greater than $r^2$.
Thus, we can build another collection of parabolic cylinders $\{Q_{r/2}(z_j, \tau_j)\}_j$, with the same cardinality, such that $(z_j, \tau_j) \in \partial \{u>0\}\cap Q_{r/2}(0, t_j) $.
For each of these cylinders we can use the density property~\eqref{eq:Density} and, adding up (using that the cylinders are pairwise disjoint), we get
\begin{equation}
		\leb^{n+1}(\partial \{u>0\}\cap E_r) \geq \sum_{j= 1}^{N/4} \leb^{n+1}(\partial \{u>0\}\cap Q_{r/2}(z_j, \tau_j)) \geq c_\star \dfrac{N}{4} \leb^{n+1}(Q_{r/2})  \geq \bar{c} \leb^{n+1}(E_r),
\end{equation}
for some constant $\bar{c}>0$ depending only on $n$ and $K_R$.
Note that in the last inequality we have used that $N\geq 1/r$.
As a consequence,
\begin{equation}
	\dfrac{\leb^{n+1}(\partial \{u>0\}\cap E_r) }{\leb^{n+1}(E_r)} \geq \bar{c}  > 0,
\end{equation}
which combined with \eqref{eq:LebesgueDensity} ---recall that we assume $(x,t)=(0,0)$--- yields 
\begin{equation}
	\lim_{r \downarrow 0}\dfrac{\leb^{n+1}(\overline{\{u>0\}}\cap E_r)}{\leb^{n+1}(E_r)}  > 1,
\end{equation}
a contradiction.
\end{proof}
%
%
%
%
%

%
%
%
%
%
%
\section{Extensions}\label{Sec:Extension}
In this final section, we explain how to modify the arguments above to construct strong solutions to
   \begin{equation}\label{eq:BDDDomain1}
	\begin{cases}
		\partial_t u - \Delta u = - f_\gamma(u) \quad &\text{ in }  \Omega \times (0,\infty), \\
		u = 0              \quad &\text{ in }  \partial\Omega \times (0,\infty), \\   
		u|_{t=0} = u_0     \quad &\text{ in } \Omega,
	\end{cases}
	\end{equation}
where $\Omega \subset \RR^n$ is a bounded domain. Before starting with the proof, a couple of remarks:

\medskip

\noindent $\bullet$ The notion of strong solution to \eqref{eq:BDDDomain1} is exactly the one given in Definition \ref{def:StrongSolReacDiffProb}, replacing $\RR^n$ with $\Omega$, $Q$ with $\Omega_\infty := \Omega\times(0,\infty)$, and $H^1(\RR^n)$ with $H^1_0(\Omega)$.

\smallskip

\noindent $\bullet$ As the reader will easily see, the argument below apply with small changes if we impose homogeneous Neumann conditions on the parabolic boundary, instead of the Dirichlet ones (see also \cite{AudritoSerraTilli2021:art}). 
 
\medskip	
	
Now, the idea we follow is basically the same as the one used in the above sections: we consider the family of functionals
\begin{equation}\label{eq:Functional2}
\tilde{\EE}_\vep(w) := \int_0^\infty \frac{e^{- t/\varepsilon}}{\varepsilon} \bigg( \int_\Omega (\vep|\partial_t w|^2 + |\nabla w|^2 ) \dx + 2\int_\Omega w_+^\gamma  \dx \bigg) \dt,
\end{equation}
with $\vep \in (0,1)$, and we seek for minimizers in the space
\[
\tilde{\UU}_0 := \{ u \in \tilde{\UU} : u|_{t=0} = u_0 \text{ and } u(\cdot,t) = 0 \text{ on $\partial \Omega$ \, for a.e. } t > 0\},
\]
where
\[
\tilde{\UU} := \bigcap_{r > 0} H^1(\Omega_r^+), \qquad \text{ with } \ \Omega_r^+ := \Omega \times (0,r^2),
\]
and the initial data satisfies
\begin{equation}\label{eq:DataDirichlet}
\ u_0 \in H_0^1(\Omega) \cap L^\gamma(\Omega) \quad \text{ and } \quad u_0 \geq 0 \quad \text{a.e. in } \Omega.
\end{equation}
Exactly as above, since each element $u \in \tilde{\UU}$ has all (first) weak derivatives in $L^2(\Omega_r^+)$ for every $r > 0$, the equations $u|_{t=0} = u_0$ and $u(\cdot,t) = 0$  on $\partial \Omega$ for a.e. $t > 0$, appearing in the definition of $\tilde{\UU}_0$, must be intended in the sense of traces.

Given these definitions, we may state and prove the following corollary of \Cref{thm:MainIntro}.
\begin{cor}\label{cor:Dirichlet}
Let $n \geq 1$, $\gamma \in [1,2)$ , $u_0$ as in \eqref{eq:DataDirichlet}, and $f_\gamma$ as in \eqref{eq:Reaction}. 
Then there exist a sequence of minimizers $\{u_{\vep_j}\}_{j\in\NN}$ of \eqref{eq:Functional2} in $\tilde{\UU}_0$ and a  strong solution $u \in \tilde{\UU}_0$ to \eqref{eq:BDDDomain1}, continuous in $\Omega_\infty$, such that 
\[
\begin{aligned}
&u_{\vep_j} \rightharpoonup u \quad \text{ weakly in } \tilde{\UU} \\
&u_{\vep_j} \to u \quad \text{ locally uniformly in } \Omega_\infty.
\end{aligned}
\]
\end{cor}
\begin{proof} We summarize the proof in three remarks as follows.

\medskip

\noindent $\bullet$ The methods used to prove Lemma \ref{lem:ExistenceMin} apply in this setting too, with minor changes: the assumptions on $u_0$ allow us to use it as competitor and obtain the estimate \eqref{eq:LevelEst}, while the proof of existence of minimizers (for fixed $\vep \in (0,1)$) is the same, once we replace $B_r$ with $\Omega$ and $Q_r^+$ with $\Omega_r^+$.
  
\smallskip 

\noindent $\bullet$ Also the energy estimates stated in Proposition \ref{thm:UnifEnergyEst} hold for minimizers of $\tilde{\EE}_\vep$ in $\tilde{\UU}_0$ if we replace $\RR^n$ with $\Omega$. Indeed, it is enough to replace $\RR^n$ with $\Omega$ in the definitions of $I$ and $R$, below formula \eqref{eq:ShortJFuncInner}. The rest of the proof is exactly the same.

\smallskip

\noindent $\bullet$ Finally, we notice that the proofs of \Cref{Lem:EulerEqMinFF}, \Cref{Lem:NonDeg}, \Cref{lem:ParNonDeg}, \Cref{lem:Optimalgrowth}, and \Cref{cor:OptimalReg} are purely local and do not depend on the boundary behavior of minimizers. Therefore, minimizers of $\tilde{\EE}_\vep$ in $\tilde{\UU}_0$ satisfy the same statements with $Q$ replaced by $\Omega_\infty$.

\medskip

Putting such remarks together and proceeding as in the proof of Theorem \ref{thm:MainIntro}, our statement follows.
\end{proof}

\bigskip

\section*{Acknowledgements}

We thank the anonymous referee for the careful reading of the first version of this article and for the helpful comments he/she gave us.

%
%
%
%
%
%
%
%
%
%
%
%
%
%
%


\end{document}